\def\adh#1{\overline{#1}}
\newtheorem {pro}{Proposition}[section]
\newtheorem {thm}[pro]{Theorem}
\newtheorem {cor}[pro]{Corollary}
\newtheorem{lem}[pro]{Lemma}
\theoremstyle{definition}
 \newtheorem {rem}[pro]{Remark}
\newtheorem {dfn}[pro]{Definition}
\newcommand{\jac}{\mbox{jac}}
\newcommand{\tet}{\tilde{\Theta}}
\newcommand{\tra}{\mathbf{tr}}
\newcommand{\R}{\mathbb{R}}
\newcommand{\N}{\mathbb{N}}
\newcommand{\Cc}{\mathscr{C}}
\newcommand{\Rr}{\mathscr{R}}
\newcommand{\cfr} {\mathfrak{c}}
\newcommand{\et}{\quad \mbox{and} \quad }
\newcommand{\hn}{\mathcal{H}}
\newcommand{\psit}{\widetilde{\psi}}
\newcommand{\St}{\mathcal{D}}
\newcommand{\mba}{ {\overline{M}}}
\newcommand{\W}{W}
\newcommand{\D}{\mathcal{D}}
\newcommand{\ep}{\varepsilon}
\newcommand{\E}{\mathcal{E}}
\newcommand{\pa}{\partial}
\newcommand{\hh}{\mathcal{V}}
\newcommand{\bou}{\mathbf{B}}
\newcommand{\sph}{\mathbf{S}}
\newcommand{\Bb}{\overline{ \mathbf{B}}}
\newcommand{\xo}{{x_0}}
\newcommand{\mc}{{\check{M}}}
\newcommand{\supp}{\mbox{\rm supp}}
\title[]{Trace operators on bounded subanalytic manifolds}
\author[A. Valette  and G. Valette]{Anna Valette  and Guillaume Valette}
\address[A. Valette]{Katedra Teorii Optymalizacji i Sterowania, Wydzia\l\ Matematyki i Informatyki Uniwersytetu Jagiello\'nskiego, ul. S. \L ojasiewicza 6, Krak\'ow, Poland}
\email{anna.valette@im.uj.edu.pl}\address[G. Valette]{Instytut Matematyki Uniwersytetu
Jagiello\'nskiego, ul. S. \L ojasiewicza 6, Krak\'ow, Poland}\email{guillaume.valette@im.uj.edu.pl}
\keywords{Trace theorem,  density of smooth functions, Sobolev spaces, singular domains, subanalytic sets.}
\thanks{Research partially supported by the NCN grant  2014/13/B/ST1/00543.}
\subjclass[2020]{46E35, 32B20, 14P10}
\begin{document}
\begin{abstract}
We prove that if $M\subset \R^n$ is a bounded subanalytic submanifold of $\R^n$ such that $\bou(x_0,\ep)\cap M$ is connected for every $x_0\in\mba$ and $\ep>0$ small, then, for $p\in [1,\infty)$ sufficiently large, the space $\Cc^\infty(\mba)$ is dense in the Sobolev space $\W^{1,p}(M)$. We also show that for $p$ large, if $A\subset \mba\setminus M$ is subanalytic then the restriction mapping $   \Cc^\infty(\mba)\ni u\mapsto u_{|A}\in L^p(A)$ is continuous (if $A$ is endowed with the Hausdorff measure), which makes it possible to define a trace operator, and then prove that  compactly supported functions are dense in the kernel of this operator. We finally generalize these results to the case where our assumption of connectedness at singular points of $\mba$ is dropped.
\end{abstract}
\maketitle
\begin{section}{Introduction}
In \cite{poincineq}, we proved a Poincar\'e type inequality for the functions of Sobolev spaces  of bounded subanalytic open subsets of $\R^n$. The novelty was that the boundary of the domain was not assumed to be Lipschitz regular. In the present article, we continue our study of Sobolev spaces of subanalytic domains and investigate the trace operator on  $\W^{1,p}(M)$, where $M$ is a bounded subanalytic submanifold of $\R^n$, in the case where $p$ is large.   This manifold $M$ may of course admit singularities in its closure which are not metrically conical.

The trace operator plays a crucial role in the theory of partial differential equations, as it helps to find weak formulations of the problems. This theory, which is very satisfying on domains that have Lipschitz regular boundary,  is much more challenging when singularities arise \cite{ma}. 
Our ultimate aim is therefore to develop all the material necessary to find weak formulations of basic problems of PDE on a subanalytic or semi-algebraic open subset of $\R^n$, such as for instance elliptic differential  equations with Dirichlet boundary conditions. Our method, which relies on constructions and techniques that emanate from real algebraic geometry from which many algorithms were implemented,   is highly likely to provide effective algorithms for computing approximations of solutions of partial differential equations on semi-algebraic domains.

We first show that if a subanalytic bounded submanifold $M\subset \R^n$ is normal (Definition \ref{dfn_embedded}), i.e., if $M$ is connected at every point of its frontier, then, for every sufficiently large $p$ (not infinite), the set $\Cc^\infty(\mba)$ is dense  in $\W^{1,p}(M)$ (Theorem \ref{thm_trace} $(i)$). The condition of being normal is  proved to be necessary (Corollary \ref{cor_normal}). All the results that we establish are no longer true if we drop the assumption ``$p$ large enough'', and the value from which the theorem holds heavily depends on the Lipschitz geometry of the manifold near the singularities of the boundary.  
This result, which generalizes the famous theorem which is known on manifolds with Lipschitz boundary, is useful to define the trace, and we prove that, if $A$ is a subanalytic subset of $\mba\setminus M$ then the operator $\Cc^\infty(\mba)\ni u\mapsto u_{|A}\in L^p(A)$ is bounded, if $A$ is endowed with the Hausdorff measure (for $p$ large enough, Theorem \ref{thm_trace} $(ii)$). We also show (Theorem \ref{thm_trace} $(iii)$, still assuming the manifold to be normal, see also Remark \ref{rem_addendum} (\ref{item_stra})) that, when $A$ has pure dimension $k$, every  function that belongs to the kernel of this operator can be approximated by functions of $\Cc^\infty(\mba)$ that vanish in the vicinity of $A$. In particular, compactly supported functions are dense among the functions that vanish on the boundary   (Corollary \ref{cor_densite_nonnormal}). The situation is actually a bit tricky here and one has to be careful when the set $A$ is composed by several sets of different dimension (the corresponding Hausdorff measure has then to be considered on the different parts) and this is the reason why the result is stated with a stratification. 

The basic idea of our approach relies on the very precise description of the Lipschitz geometry obtained by the second author in \cite{gvpoincare, livre} (Theorem \ref{thm_local_conic_structure}), which is the natural generalization to Lipschitz geometry of the topological conic structure,  well-known to subanalytic and o-minimal geometers \cite{vdd, costeomin}. This theorem makes it possible for us to construct mollifying operators near singularities. 

In order to deal with the case  of non necessarily normal manifolds, we introduce and construct in the last section the  $\Cc^\infty$ normalizations, which are inspired from the  $\Cc^0$ normalizations of pseudomanifolds \cite{mcc,ih1}. This enables us to define a multi-valued trace by considering a normalization of the manifold, and very close results about density of smooth functions and trace operators can then be achieved. 
Our construction of $\Cc^\infty$ normalizations forcing us to increase the codimension of $M$ in $\R^n$, the case of a submanifold of positive codimension in $\R^n$ thus  turns out to be useful even if one is only interested in the case of an open subset of $\R^n$.     
\end{section}

\begin{subsection}{Some notations}Throughout this article,  $n$, $j$, and $k$ will stand for  integers. By ``manifold'' we will always mean $\Cc^\infty$ manifold and by ``smooth mapping'' we mean $\Cc^\infty$ mapping.  The letter 
$M$ will stand for a bounded subanalytic submanifold of $\R^n$ and $m$ for its dimension.

The origin of $\R^n$ will be denoted $0_{\R^n}$. When the ambient space will be obvious from the context, we will however omit the subscript $\R^n$. We write $<,>$ for the euclidean inner product of this space  and $|.|$  for the euclidean norm.  Given $x\in  \R^n$ and $\ep>0$, we respectively denote by $\sph(x,\ep)$ and  $\bou(x,\ep)$ the sphere and the open ball of radius $\ep$ that are centered  at $x$ (for the euclidean norm), while $\adh{\bou}(x,\ep)$ will stand for the corresponding closed ball.
 Given a subset $A$ of $\R^n$, we denote the closure of $A$ by $\adh{A}$ and set $\delta A=\adh{A}\setminus A$. 

 We denote by $\hn^k$ the $k$-dimensional Hausdorff measure and by $L^p(A,\hn^k)$ the set of $L^p$ measurable functions on $A$ for the measure $\hn^k$. If $E\subset \R^n$ and $i\in \N\cup \{\infty\}$, we will write $\Cc^i(E)$ for the space of those functions on $E$ that extend to a $\Cc^i$ function on an open neighborhood of $E$ in $\R^n$. 
 
A mapping $h:A\to B$, $A\subset \R^n, B\subset \R^k$, is {\bf Lipschitz} if there is a constant $C$ such that $|h(x)-h(x')|\le C|x-x'|$ for all $x$ and $x'$. It is {\bf bi-Lipschitz} if it is a homeomorphism and if in addition $h$ and $h^{-1}$ are both Lipschitz.  Given two nonnegative functions $\xi$ and $\zeta$ on a set $A$ as well as a subset $B$ of $A$, we write ``$\xi\lesssim \zeta$ on $B$'' when there is a constant $C$ such that $\xi(x) \le C\zeta(x)$ for all $x\in B$.

 A submanifold of $\R^n$ will always be endowed with its canonical measure, provided by volume forms. As integrals will always be considered with respect to this measure, we will not indicate the measure when integrating on a manifold. Given a measurable function $u$ on $M$, for each $p\in [1,\infty)$ we denote by $||u||_{L^p(M)}$ the (possibly infinite) $L^p$ norm of $u$  (with respect to the canonical measure of $M$). As usual, we denote by $L^p(M)$ the set of measurable functions on $M$ for which  $||u||_{L^p(M)}$ is finite.

We then let $$\W^{1,p}(M):= \{u\in L^p(M),\; |\partial u| \in L^p(M)\}$$ denote the Sobolev space, where $\pa u$ stands for the gradient of $u$ in the sense of distributions.  

We will several times make use of the fact that a bi-Lipschitz homeomorphism relating two smooth manifolds $h:M\to M'$ identifies the respective Sobolev spaces and that we have  $\pa(u\circ h)=^{\mathbf {t}}\hspace{-2mm}Dh (\pa u\circ h)$ almost everywhere, for all $u\in W^{1,p}(M')$ (see for instance \cite[proof of Theorem III.2.13]{boyer}). It is also well-known that this space, equipped with the norm
$$||u||_{\W^{1,p}(M)}:=||u||_{L^p(M)}+| |\partial u||_{L^p(M)}$$ is a Banach space, in which
  $\Cc^\infty(M)$ is dense  for all $p\in [1,\infty)$. We will regard $\Cc^\infty(\mba)$ as a subset of $\W^{1,p}(M)$, and we will write $\Cc_0^\infty(M)$ for the set of elements of $\Cc^\infty(M)$ that are compactly supported.
 
 Given a mapping $h:M\to M$, we write $\jac\, h$  for the absolute value of the determinant of $Dh$, the derivative of $h$ (defined at the points where $h$ is differentiable). 
 We write $p'$ for the H\"older conjugate of $p$, i.e., $p'=\frac{p}{p-1}$.

\end{subsection}

\begin{section}{Subanalytic sets}
 We refer the reader to  \cite{bm, ds, l, livre} for all the basic facts about subanalytic geometry.  Actually, \cite{livre} also gives a detailed presentation of the Lipschitz properties of these sets, so that the reader can find there all the needed facts to understand the result achieved in the present article.

\begin{dfn}\label{dfn_semianalytic}
A subset $E\subset \R^n$ is called {\bf semi-analytic} if it is {\it locally}
defined by finitely many real analytic equalities and inequalities. Namely, for each $a \in   \R^n$, there is
a neighborhood $U$ of $a$ in $\R^n$, and real analytic  functions $f_{ij}, g_{ij}$ on $U$, where $i = 1, \dots, r, j = 1, \dots , s_i$, such that
\begin{equation}\label{eq_definition_semi}
E \cap   U = \bigcup _{i=1}^r\bigcap _{j=1} ^{s_i} \{x \in U : g_{ij}(x) > 0 \mbox{ and } f_{ij}(x) = 0\}.
\end{equation}

The flaw of the  semi-analytic category is that  it is not preserved by analytic morphisms, even when they are proper. To overcome this problem, we prefer working with the  subanalytic sets, which are defined as the projections of the semi-analytic sets.

A subset $E\subset \R^n$  is  {\bf  subanalytic} if 
 each point $x\in\R^n$ has a neighborhood $U$ such that $U\cap E$ is the image under the canonical projection $\pi:\R^n\times\R^k\to\R^n$ of some relatively compact semi-analytic subset of $\R^n\times\R^k$ (where $k$ depends on $x$).
   
   A subset $Z$ of $\R^n$ is  {\bf globally subanalytic} if $\hh_n(Z)$ is a subanalytic subset of $\R^n$, where $\hh_n : \R^n  \to (-1,1) ^n$ is the homeomorphism defined by $$\hh_n(x_1, \dots, x_n) :=  (\frac{x_1}{\sqrt{1+|x|^2}},\dots, \frac{x_n}{\sqrt{1+|x|^2}} ).$$

   We say that {\bf a mapping $f:A \to B$ is  subanalytic} (resp. globally subanalytic), $A \subset \R^n$, $B\subset \R^m$ subanalytic (resp. globally subanalytic), if its graph is a  subanalytic  (resp. globally subanalytic) subset of $\R^{n+m}$. In the case $B=\R$, we say that  $f$ is a (resp. globally) {\bf  subanalytic function}. For simplicity globally subanalytic sets and mappings will be referred as {\bf definable} sets and mappings (this terminology is often used by o-minimal geometers \cite{vdd,costeomin}).

   \end{dfn}

The globally subanalytic category is very well adapted to our purpose.  It is stable under intersection, union, complement, and projection. It thus constitutes an o-minimal structure \cite{vdd, costeomin}, and consequently admits cell decompositions and stratifications, from which it comes down that definable sets enjoy a large number of finiteness properties (see \cite{costeomin,livre} for more).

Clearly,  a bounded subset of $\R^n$ is subanalytic if and only if it is globally subanalytic. This is the reason why we generally do not mention ``globally'' when working with bounded sets. Note that a subanalytic function $f$ on a bounded set $A$ may be unbounded and therefore not globally subanalytic.

Our results about $L^p$ functions will be valid {\it for sufficiently large $p$}, which means that there will be $p_0\in \R$ such that the claimed fact will be true for all $p\in (p_0,\infty)$.  This number $p_0$ will be provided by \L ojasiewicz's inequality, which is one of the main tools of subanalytic geometry. It originates in the fundamental work of S. \L ojasiewicz \cite{lojdiv}, who established this inequality in order to answer a problem  about distribution theory.  We shall make use of the following version (see the proof of (\ref{ln})):

 \begin{pro}\label{pro_lojasiewicz_inequality}(\L ojasiewicz's inequality)
Let $f$ and $g$ be two globally subanalytic functions on a  globally subanalytic set $A$ with $\sup\limits_{x\in A} |f(x)|<\infty$. Assume that
 $\lim\limits_{t \to 0} f(\gamma(t))=0$ for every
globally subanalytic arc $\gamma:(0,\ep) \to A$ satisfying $\lim\limits_{t \to 0} g(\gamma(t))=0$.

Then there exist $\nu \in \N$ and $C \in \R$ such that for any $x \in A$:
$$|f(x)|^\nu \leq C|g(x)|.$$
\end{pro}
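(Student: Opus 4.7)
The plan is to reduce everything to a one-variable statement through the envelope
$$F(t) := \sup\{|f(x)| : x \in A,\ |g(x)| \le t\}, \qquad t > 0.$$
Because $f$ is bounded by some constant $M$ and $A, f, g$ are definable, $F : (0, \infty) \to [0, M]$ is a nondecreasing definable function (boundedness and definability of the supremum are guaranteed by o-minimality, after first composing with $\hh_n$ to send $A$ into the bounded box $(-1,1)^n$ if one is worried by $A$ being unbounded). Once the limit $\lim_{t \to 0^+} F(t) = 0$ is established, the Puiseux-type behaviour of definable functions of one variable in a polynomially bounded o-minimal structure yields an estimate $F(t) \le C_1 t^{1/\nu}$ on some interval $(0, \delta)$ for suitable $\nu \in \N$ and $C_1 > 0$. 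This immediately gives $|f(x)|^\nu \le C_1^\nu |g(x)|$ whenever $|g(x)| \le \delta$, while on the complementary region $\{|g(x)| > \delta\}$ the bound $|f(x)|^\nu \le (M^\nu/\delta) |g(x)|$ is automatic, so taking $C := \max(C_1^\nu, M^\nu/\delta)$ yields the required global inequality.

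The core of the argument is thus the claim $\lim_{t \to 0^+} F(t) = 0$, which I plan to prove by contradiction. If the limit were some $L > 0$, then the definable set $S := \{x \in A : |f(x)| \ge L/2\}$ would satisfy $\inf_{x \in S} |g(x)| = 0$, while $g$ could not vanish on $S$: applying the arc hypothesis to the constant arc at any point $x_0$ with $g(x_0) = 0$ forces $f(x_0) = 0$, contradicting $x_0 \in S$. The image $|g|(S) \subset (0, \infty)$ is a definable subset of $\R$ having $0$ in its closure, so by o-minimality it contains some initial interval $(0, \ep)$. Invoking the standard definable selection theorem for the map $x \mapsto |g(x)|$, I then lift this interval to a definable arc $\gamma : (0, \ep) \to S$ with $|g(\gamma(t))| = t$; along this arc $g \circ \gamma \to 0$ while $|f \circ \gamma| \ge L/2$, directly contradicting the hypothesis.

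The main obstacle is the Puiseux-type estimate in the one-variable reduction: this relies on the fact that globally subanalytic functions form a polynomially bounded o-minimal structure whose one-variable definable functions admit Puiseux expansions at $0$, a nontrivial but classical result. The remaining pieces — definability of the envelope $F$, the contradiction step, and the definable selection — are routine within the o-minimal framework once one has used $\hh_n$ to reduce to the bounded setting, where cell decomposition applies without caveat.
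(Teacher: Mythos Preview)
Your argument is correct and follows the standard route to \L ojasiewicz-type inequalities in o-minimal geometry: reduce to a one-variable envelope $F(t)=\sup\{|f(x)|:|g(x)|\le t\}$, use curve selection (constant arcs for the zero set of $g$, then a definable section over $(0,\ep)$) to force $F(0^+)=0$, and finish with the Puiseux expansion of a bounded definable function of one variable in a polynomially bounded structure. Each step is sound; the only cosmetic point worth adding is that if $\{|g|\le t\}$ is empty for small $t$ then $|g|$ is bounded below on $A$ and the inequality is immediate, so one may assume the envelope is everywhere defined.

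As for comparison: the paper does not actually prove this proposition. After the statement it simply writes ``See for instance \cite{li} for a proof'' and moves on, treating \L ojasiewicz's inequality as a black box imported from the literature. So there is no in-paper argument to set yours against. Your proof is self-contained and would serve perfectly well in place of that citation; it is essentially the argument one finds in the cited reference and in standard expositions of o-minimal \L ojasiewicz inequalities.
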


See  for instance \cite{li} for a proof. 
 The main ingredient of our approach is the following result achieved by the second author in \cite{gvpoincare} (see also \cite{livre} for a complete expository of the theory) which describes the conic structure of subanalytic sets from the metric point of view. In the theorem below, $x_0* (\sph(x_0,\ep)\cap X)$ stands for the cone over $\sph(x_0,\ep)\cap X$ with vertex at $x_0$.

\begin{thm}[Lipschitz Conic Structure]\label{thm_local_conic_structure}
  Let  $X\subset \R^n$ be subanalytic and $x_0\in X $. 
For $\ep>0$ small enough, there exists a Lipschitz subanalytic homeomorphism
$$H: x_0* (\sph(x_0,\ep)\cap X)\to  \Bb(x_0,\ep) \cap X,$$  
  satisfying $H_{| \sph(x_0,\ep)\cap X}=Id$, preserving the distance to $x_0$, and having the following metric properties:
\begin{enumerate}[(i)] 
 \item\label{item_H_bi}     The natural retraction by deformation onto $x_0$ $$r:[0,1]\times  \Bb(x_0,\ep)\cap X \to \Bb(x_0,\ep)\cap X,$$ defined by $$r(s,x):=H(sH^{-1}(x)+(1-s)x_0),$$ is Lipschitz.   
 Indeed, there is a constant $C$ such that  for every fixed $s\in [0,1]$, the mapping $r_s$ defined by $x\mapsto r_s(x):=r(s,x)$, is $Cs$-Lipschitz.
 \item \label{item_r_bi}  For each $\delta>0$,
 the restriction of $H^{-1}$ to $\{x\in X:\delta \le |x-x_0|\le \ep\}$ is Lipschitz and, for each $s\in (0,1]$, the map  $r_s^{-1}:\Bb(x_0,s\ep) \cap X\to \Bb(x_0,\ep) \cap X$ is Lipschitz. 
\end{enumerate}
\end{thm}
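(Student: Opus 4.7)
The idea is to realize $H^{-1}$ as the time-$\ln(\ep/|x-x_0|)$ flow of a suitably chosen Lipschitz vector field $w$ on $X\setminus\{x_0\}$ that pushes each point radially outward (with respect to $\rho(x):=|x-x_0|$) onto the sphere $\sph(x_0,\ep)\cap X$. First I would fix a Lipschitz stratification $\Sigma$ of $X$ near $x_0$ in the sense of Mostowski (existence for subanalytic sets being due to Parusi\'nski); refining it if necessary, I may assume that every stratum $S \neq \{x_0\}$ is a subanalytic submanifold on which $\rho$ is a submersion (this uses the fact that the critical set of $\rho_{|S}$ is a lower-dimensional subanalytic subset, which can be absorbed into the stratification). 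The Lipschitz-stratification hypothesis yields a uniform control on ``how fast nearby strata collapse,'' and this is exactly what is needed to keep Lipschitz constants from blowing up as one approaches the vertex.

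On each stratum $S$ set $v_S(x):=\rho(x)\,\nabla_S\rho(x)/|\nabla_S\rho(x)|^2$, so that $d\rho(v_S)=\rho$ on $S$. Using the Mostowski extension lemma and a subanalytic partition of unity subordinate to tubular neighborhoods of the strata (the key point being that in a Lipschitz stratification one can glue stratified vector fields to produce a globally Lipschitz one, with Lipschitz constant independent of the distance to $x_0$), I would assemble a stratified vector field $v$ on $X\setminus\{x_0\}$ tangent to every stratum, satisfying $d\rho(v)=\rho$, and Lipschitz on each set $\{x\in X:\delta\le |x-x_0|\le\ep\}$ with constant independent of $\delta$. Let $w:=v/\rho$; then $d\rho(w)=1$, and the flow $\Phi_t$ of $w$ satisfies $\rho(\Phi_t(x))=e^t\rho(x)$. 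Define
\[
H(ty+(1-t)x_0):=\Phi_{\ln t}(y)\qquad\text{for }y\in\sph(x_0,\ep)\cap X,\;t\in(0,1],
\]
and $H(x_0):=x_0$. Subanalyticity of $H$ follows from subanalyticity of $v$ together with the fact that integrals of subanalytic vector fields of this ``radial'' type remain in a fixed o-minimal structure (alternatively, one may verify subanalyticity a posteriori by noticing that the trajectories of $w$ project to subanalytic arcs after a cylindrical decomposition compatible with $\rho$).

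It then remains to establish the metric properties. The retraction satisfies $r_s(x)=\Phi_{\ln s}(x)$, so that $|r_s(x)-x_0|=s|x-x_0|$. A na\"ive Gronwall estimate only gives $|D\Phi_{\ln s}|\lesssim e^{L|\ln s|}$ for the Lipschitz constant $L$ of $w$, which is worthless as $s\to 0$. To obtain the sharp bound $|D r_s|\lesssim s$ of $(\ref{item_H_bi})$, I would split the derivative of the flow into its component along $\partial/\partial\rho$ and its ``spherical'' component: along $\partial/\partial\rho$ the flow multiplies lengths by exactly $s$, while the spherical component is governed by a linear ODE whose coefficient matrix is controlled by the Lipschitz-stratification estimates on the transverse part of $w$; the specific normalization $d\rho(w)=1$ produces a $-1$ on the diagonal that beats the $L$ and yields $|D\Phi_{\ln s}|\lesssim s$. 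The inverse statements in $(\ref{item_r_bi})$ follow from the same analysis applied to the reversed flow on the annulus $\{\delta\le\rho\le\ep\}$, on which $L|\ln(\ep/\delta)|$ is a bounded constant.

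The main obstacle, and the step where the Lipschitz-stratification machinery is truly indispensable, is the uniform Lipschitz control on $w$ near $x_0$: constructing $v$ stratum by stratum only yields constants that a priori blow up as $\rho\to 0$, and it is precisely Mostowski's uniform estimates across strata that prevent this. Once $w$ is in hand, the Lipschitz bound with factor $s$ on $r_s$ is a genuine gain over the trivial Gronwall bound, and requires exploiting the factorization of the flow along and transverse to $\nabla\rho$ as described above.
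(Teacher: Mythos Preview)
The paper does not prove this theorem: it is quoted from \cite{gvpoincare} (see also \cite{livre}) as the ``main ingredient'' of the approach, with no argument given here. So there is no in-paper proof to compare against; what follows are comments on your outline relative to what the cited references actually do.

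Your strategy --- build a stratified Lipschitz vector field via Mostowski--Parusi\'nski Lipschitz stratifications, normalize it so that $d\rho(w)=1$, and integrate --- is indeed the standard way to get a \emph{Lipschitz} conic structure, and the splitting of $D\Phi_{\ln s}$ into radial and spherical parts to beat the naive Gronwall bound is the right mechanism for the $Cs$-Lipschitz estimate on $r_s$. That part of the plan is sound.

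The genuine gap is subanalyticity of $H$. Flows of subanalytic (even polynomial) vector fields are in general \emph{not} subanalytic: already $\dot x=x$ gives $\Phi_t(x)=xe^t$, and your parametrization $t\mapsto\Phi_{\ln t}$ does not repair this, since definability of the graph of $\Phi$ as a set in $(t,x,\Phi_t(x))$ is what is needed. Your two escape hatches are not convincing: ``radial type'' vector fields have no special definability property in general, and the ``a posteriori via cylindrical decomposition'' remark is circular --- one would need the trajectories to already be subanalytic to decompose them. In the references \cite{gvpoincare,livre} this issue is handled not by integrating a vector field but by constructing $H$ directly from a Lipschitz cell decomposition (a definable, piecewise-algebraic object), so that subanalyticity is built in from the start and the Lipschitz estimates are read off the combinatorics of the cells. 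If you want to keep the flow picture, you would have to replace $w$ by a vector field whose flow is definable by construction (e.g., piecewise linear in suitable coordinates), which essentially forces you back to the cell-decomposition approach.
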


\begin{subsection}{Stratifications}  

\begin{dfn}\label{dfn_stratifications}
 A {\bf 
stratification} of a subset of $ \R^n$ is a finite partition of it into
definable $\Cc^\infty$ submanifolds of $\R^n$, called {\bf strata}. A stratification is {\bf compatible} with a set if this set is the union of some strata. A {\bf refinement} of a stratification $\Sigma$ is a stratification $\Sigma'$ compatible with the strata of $\Sigma$. 


  A  stratification $\Sigma$ of a set $X$ is {\bf locally bi-Lipschitz trivial} if for every   $S\in \Sigma$ and $x_0\in S$,  there are a neighborhood $W$ of $x_0$ in $S$ and a $\Cc^\infty$ definable retraction $\pi:U\to W$, with $U$ neighborhood of $x_0$ in $X$, as well as a  bi-Lipschitz homeomorphism  $$\Lambda:U\to (\pi^{-1}(x_0)\cap X) \times W, $$ satisfying:
  \begin{enumerate}[(i)]
   \item  $\pi(\Lambda^{-1}(x,y))= y$, for all $(x,y)\in (\pi^{-1}(x_0)\cap X)\times  W$.
 \item   $\Sigma_{x_0}:=\{  \pi^{-1}(x_0)\cap Y:Y\in \Sigma\} $ is a stratification of $ \pi^{-1}(x_0)\cap X$,  and $\Lambda(Y\cap U)=(\pi^{-1}(x_0)\cap Y)\times W$, for all $Y\in \Sigma$.
  \end{enumerate}
\end{dfn}

 \begin{thm}\label{pro_existence_stratifications}
  Every stratification $\Sigma$ can be refined into a stratification which is locally bi-Lipschitz trivial. 
 \end{thm}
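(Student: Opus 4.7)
The plan is to refine $\Sigma$ by induction on the dimension of the strata, reducing the local bi-Lipschitz triviality along each stratum to a parametric version of the Lipschitz Conic Structure theorem (Theorem \ref{thm_local_conic_structure}). Let $X$ denote the underlying set of $\Sigma$. First I would replace $\Sigma$ by a refinement that is a definable Whitney stratification, available in the subanalytic/definable category; this provides, for each stratum $S$, a smooth definable tubular retraction $\pi_S:U_S\to S$ on a neighborhood $U_S$ of $S$ in the union of the strata that contain $S$ in their closure.

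For a $0$-dimensional stratum $\{x_0\}$ the statement is essentially Theorem \ref{thm_local_conic_structure}: take $W=\{x_0\}$, $U=\Bb(x_0,\ep)\cap X$ and $\Lambda:=H^{-1}$, refining each piece $Y\cap U$ into the cone $H(x_0*(\sph(x_0,\ep)\cap Y))$ so that condition (ii) of Definition \ref{dfn_stratifications} is satisfied. For a stratum $S$ of positive dimension and $x_0\in S$, I would use $\pi:=\pi_S$ to describe a neighborhood of $x_0$ in $X$ as a definable family of fibres $F_y:=\pi^{-1}(y)\cap X$, $y\in W\subset S$, apply Theorem \ref{thm_local_conic_structure} fibrewise inside each $F_y$ with apex $y$, obtaining conic homeomorphisms
\[
H_y:y*(\sph(y,\ep_y)\cap F_y)\to \Bb(y,\ep_y)\cap F_y,
\]
and then glue these cones along $y\in W$ through a definable bi-Lipschitz trivialization $\Phi_y$ of the link family $\{\sph(y,\ep)\cap F_y\}_{y\in W}$ with the single link at $x_0$, extended radially by means of the Lipschitz deformation retraction $r$ of item (\ref{item_H_bi}). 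Setting
\[
\Lambda(x):=\bigl(H_{x_0}\bigl(\Phi_{\pi(x)}(H_{\pi(x)}^{-1}(x))\bigr),\,\pi(x)\bigr)
\]
produces the desired bi-Lipschitz map $\Lambda:U\to(\pi^{-1}(x_0)\cap X)\times W$, compatible by construction with the other strata of $\Sigma$ (which cuts the link family in a definable way and is therefore preserved by $\Phi_y$).

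The hard part is to produce the family $\{H_y\}_{y\in W}$ as a definable family of bi-Lipschitz homeomorphisms with a uniform lower bound on the radii $\ep_y$ and uniform bi-Lipschitz constants. Theorem \ref{thm_local_conic_structure} is stated pointwise, so upgrading it to a parametric form requires revisiting its construction in the presence of a definable parameter. After further refinement of $W$ (and accordingly of $S$ inside $\Sigma$) using o-minimal cell decomposition to partition the parameter space along the jump loci of the relevant definable invariants, the bounds needed for uniform Lipschitz constants come from \L ojasiewicz's inequality (Proposition \ref{pro_lojasiewicz_inequality}) applied to the definable function $y\mapsto$ (best Lipschitz constant of $H_y$). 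Analogously, the existence of a uniform $\ep_0>0$ with $\ep_y\geq\ep_0$ on every piece is a direct application of definable choice and the finiteness properties of the o-minimal structure. Once the parametric Lipschitz Conic Structure is in place, assembling $\Lambda$ and checking conditions (i) and (ii) of Definition \ref{dfn_stratifications} reduces to bookkeeping, and iterating over the finitely many strata of $\Sigma$ (from smallest to largest dimension) yields the sought-after refinement.
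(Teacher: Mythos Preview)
The paper does not actually prove this theorem: immediately after the statement it simply cites the literature, noting that Mostowski's Lipschitz stratifications are locally bi-Lipschitz trivial, that their existence for subanalytic sets is due to Parusi\'nski, and that alternatively the second author's bi-Lipschitz version of Hardt's theorem yields definably bi-Lipschitz trivial stratifications (with a pointer to \cite[Proposition 4.5.8]{livre}).

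Your proposal, by contrast, attempts a self-contained argument via a parametric form of the Lipschitz Conic Structure theorem. There is a genuine gap here, and you identify it yourself: the ``hard part'' is producing a \emph{definable family} $\{H_y\}_{y\in W}$ with uniform bi-Lipschitz constants, and you concede this ``requires revisiting its construction in the presence of a definable parameter.'' That is not a technical loose end; it is the entire content of the theorem. Theorem~\ref{thm_local_conic_structure} is stated and proved pointwise, and upgrading it to a uniform parametric statement is of the same order of difficulty as the bi-Lipschitz Hardt theorem the paper invokes. Your appeal to \L ojasiewicz's inequality on the function ``$y\mapsto$ best Lipschitz constant of $H_y$'' presupposes that a definable choice of $H_y$ already exists, which is exactly what is at stake; \L ojasiewicz cannot manufacture the family, only bound a function once the family is given.

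There is a second structural issue: the map $\Phi_y$ you need---a bi-Lipschitz trivialization of the link family $\{\sph(y,\ep)\cap F_y\}_{y\in W}$---is itself a local bi-Lipschitz triviality statement for a stratified set of lower transverse dimension. So the induction should run on the dimension of the transverse slice $F_y$, not on the dimension of the strata of $\Sigma$ as you wrote; as stated, the inductive hypothesis does not give you $\Phi_y$. Even with the correct induction variable, you would still need the parametric conic structure at each step, so the gap above remains.
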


Lipschitz stratifications, in the sense of T. Mostowski \cite{m}, are locally bi-Lipschitz trivial. Existence of Lipschitz stratifications for subanalytic sets was proved by A. Parusi\'nski \cite{stratlip}. The bi-Lipschitz version of Hardt's theorem published in \cite{vlt} enables to construct locally definably bi-Lipschitz trivial stratifications,  in the sense that it is possible to construct locally  bi-Lipschitz trivial stratifications for which the local trivializations are in addition definable (see \cite[Proposition 4.5.8]{livre} for more details). 

   \begin{rem}\label{rem_pi_S} \begin{enumerate}
                                \item   All the results of this article are actually still valid with no modification in the proofs in the larger framework provided by polynomially bounded o-minimal structures admitting $\Cc^\infty$ cell decompositions. Existence of Lipschitz stratifications on such a structure was established in \cite{lipsomin}. With minor modifications in the proofs, one could actually also prove that the trace is an $L^p$ bounded operator without assuming existence of $\Cc^\infty$ cell decompositions (see Remark \ref{rem_normalisations_C_i}).
\item A stratification is said to be Whitney $(a)$ regular if for every sequence of points $y_i$ in  a stratum $Y$ tending to some point $z$ in another stratum $Z$  in such a way that $T_{y_i}Y$ converges to a vector space $\tau$ (in the Grassmannian), we have $T_z Z\subset \tau$.
 It is easy to see that if $\Sigma$ is  a Whitney $(a)$ regular stratification of $\mba$ of which $M$ is a stratum, $S$ is a stratum included in $\delta M$, and $\pi:U\to S$ is a smooth retraction, then 
  $\pi^{-1}(x)\cap M$ is a smooth manifold for every $x\in S$, after shrinking $U$ to a smaller neighborhood of $S$ if necessary. 
In the proofs, we will assume that our stratifications refine such a stratification and therefore that $\pi^{-1}(x)\cap M$ is a manifold  (the existence of such a stratification $\Sigma$ is well-known \cite{livre}).
                               \end{enumerate}
 \end{rem}

\end{subsection}


\end{section}



\begin{section}{The case of normal manifolds}\label{sect_case_normal} Let us assume that $0_{\R^n}\in\adh{M}$ and 
set for $\eta>0$: $$M^{\eta}:=\bou(0_{\R^n},\eta)\cap M\;\; \et\;\; N^{\eta}:=\sph(0_{\R^n},\eta)\cap M.$$ 

  Apply Theorem \ref{thm_local_conic_structure} with $X=M\cup\{0_{\R^n}\}$ and $x_0=0_{\R^n}$. Fix $\ep>0$ sufficiently small for the statement of the theorem to hold and for $N^\ep$ to be a smooth manifold, and  let $r$ denote the mapping provided by this theorem. Observe  that since $r$ is subanalytic, it is smooth almost everywhere.

The definition of the trace will appear in section \ref{sect_trace} (Theorem \ref{thm_trace}). It requires some preliminary local computations that are presented in sections \ref{sect_theta} and \ref{sect_r}. 
We first establish in section \ref{sect_key} some estimates that rely on the information we have about the derivative of $r$. They shed light on the role that \L ojasiewicz's inequality will play.

\begin{subsection}{Some key facts.}\label{sect_key}  There is a positive constant $C$ such that: 
 \begin{enumerate}
\item For all $s\in (0,1)$ we have for almost all $x\in M^\ep$:  \begin{equation}\label{eq_der_r_s}
       \left|\frac{\pa r}{\pa s}(s,x)\right|\le C|x|.
      \end{equation}
       \item For each $v\in L^{p}(M^\ep)$,  $p\in [1,\infty)$, we have for all $\eta\in (0,\ep]$: \begin{equation}\label{eq_coarea_sph}
  \frac{1}{C}  \left(\int_0 ^\eta ||v||_{L^p (N^\zeta)}^p d\zeta \right)^{1/p}  \le    ||v||_{L^p (M^\eta)} \leq C \left(\int_0 ^\eta ||v||_{L^p (N^\zeta)}^p d\zeta \right)^{1/p}.
       \end{equation}
       \item There exists $\nu\in \N$ such that for each $v\in L^{p}(M^\ep)$,  $p\in [1,\infty)$,  $\eta\in(0,\ep)$, and $s\in (0,1)$:
\begin{equation}\label{ln}
||v\circ r_s||_{L^p(N^\eta)}\le Cs^{-\nu/p}||v||_{L^p(N^{s\eta})}  , 
\end{equation}
 \end{enumerate}
 
\begin{proof} By definition of $r$, $\frac{\pa r}{\pa s}(s,x)=D_{sH^{-1}(x)} H(H^{-1}(x))$, where $H$ is provided by Theorem \ref{thm_local_conic_structure} (recall that $x_0=0$). As a  matter of fact, (\ref{eq_der_r_s}) immediately comes down from the fact that $H^{-1}$ preserves the distance to the origin and $H$ has bounded derivative.

Let $\rho: M\to \R$ be defined by $\rho(x):=|x|$. To prove the second estimate, note that we have (see for instance \cite{krantz}, Theorem $5.3.9$):
$$        \int_0 ^\eta \int_{N^\zeta} |v(x)|^p dx d\zeta
 = \int_{M^\eta} |v(x)|^p \left|\pa \rho(x)\right|dx. $$
Hence, it suffices to check that $\left| \pa \rho\right|$ is bounded away from $0$. Let $\tilde{\rho}:=\rho \circ H$. Because $H$ preserves the distance to the origin and $H^{-1}(M^\ep)$ is a cone, we have $\pa \tilde{\rho}(y)=\frac{y}{|y|}$, for all $y$,  which means that $\pa \rho(x)=\frac{^{\bf t}D_xH^{-1} (H^{-1}(x))}{|H^{-1}(x)|}$, for all $x$.  But, as $H$ is  Lipschitz, $\inf_{|z|=1} |{^{\bf t}D_x H^{-1}(z)}|$ must be bounded away from zero independently of $x$, yielding the required fact.

To show the last inequality, observe that, since $r_s$ is bi-Lipschitz for every $s>0$, $\jac\, r_s$ can only tend to zero if $s$ is itself going to zero. Hence, by \L ojasiewicz's inequality (see Proposition \ref{pro_lojasiewicz_inequality}),
there are an integer $\nu$ and a constant $C$ such that  for all $s\in (0,1)$ we have for almost all $x\in M^\ep$:
\begin{equation}\label{eq_jacr_s}
 \jac \,r_s(x) \ge \frac{s^{\nu}}{C}.  
\end{equation}
To prove (\ref{ln}), it thus suffices to write
$$||v\circ r_s||_{L^p (N^\eta)}^p= \left(\int_{N^\eta}|v\circ r_s|^p\right)^{1/p}\overset{(\ref{eq_jacr_s})}{\lesssim} \left(\int_{N^\eta} 
| v(r_s(x))|^p \frac{\jac \, r_s (x)}{s^\nu}\ dx \right)^{1/p},
$$and to remark that $r_s(N^\eta)=N^{s\eta}$.
\end{proof}
\begin{rem}
Fact (3) is still valid if we replace $N$ with $M$, i.e., there is $\nu\in \N$ such that: \begin{equation}\label{lnm}
||v\circ r_s||_{L^p(M^\eta)}\lesssim s^{-\nu/p}||v||_{L^p(M^{s\eta})}  , 
\end{equation} for $v\in L^p(M^\ep)$,  $p\in [1,\infty)$, $s\in(0,1)$, and $\eta<\ep$. This can be proved directly (like above for $N^\eta$) or deduced from (\ref{ln}) and (\ref{eq_coarea_sph}).
\end{rem}
\end{subsection}

\begin{subsection}{The operator $\Theta^M$.}\label{sect_theta}
Let us set for $u\in \W^{1,p}(M^\ep)$ and $x\in M^\ep$:   
\begin{equation}\label{theta}\Theta^M u(x):= \int_0^1\frac{\partial (u\circ r)}{\partial s}(s,x)\ ds= \int_0^1<\partial u(r_s(x)),\frac{\partial r}{\partial s}(s,x)>\, ds.\end{equation}
This definition makes sense (for almost every $x$) as soon as the considered function is sufficiently integrable, as established by the lemma below.

\begin{lem}\label{theta_ver}
For $p$ sufficiently large, the function $[0,1] \ni s \mapsto ||\frac{\partial (u\circ r_s)}{\partial s}||_{L^p(M^\ep)}$ belongs to $L^1([0,1])$ for all $u \in \W^{1,p}(M^\ep)$, so that $\Theta^M u$ is well-defined. Moreover, for $u\in \W^{1,p}(M^\ep)$ and  $\eta<\ep$, we then have:
\begin{equation}\label{theta_cont}||\Theta^M u||_{L^p(N^\eta)}\lesssim \eta^{1-\frac{1}{p}}\,||u||_{\W^{1,p}(M^\eta)}.\end{equation}
\end{lem}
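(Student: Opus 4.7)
The plan is to write $\Theta^M u$ as an $s$-integral of $\partial(u\circ r)/\partial s$, push the $L^p$-norm inside by Minkowski's integral inequality, and then bound the integrand by combining the pointwise estimate $|\partial r/\partial s|\le C|x|$ from (\ref{eq_der_r_s}) with the change-of-variable estimates (\ref{ln}) and (\ref{lnm}). Both of the latter produce a factor $s^{-\nu/p}$, which is integrable over $[0,1]$ as soon as $p$ is sufficiently large relative to the \L ojasiewicz exponent $\nu$; this is exactly what ``$p$ sufficiently large'' will mean.

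For the first assertion, the chain rule (valid a.e.\ because $r_s$ is Lipschitz, as recalled in the introduction) together with (\ref{eq_der_r_s}) yields
\[\Bigl|\frac{\partial(u\circ r_s)}{\partial s}(x)\Bigr|\le C|x|\cdot|\partial u(r_s(x))|\le C\ep\cdot|\partial u(r_s(x))|\quad\text{on }M^\ep,\]
and then (\ref{lnm}) gives
\[||\partial(u\circ r_s)/\partial s||_{L^p(M^\ep)}\lesssim s^{-\nu/p}\,||\partial u||_{L^p(M^\ep)}.\]
The right-hand side is integrable in $s$ over $[0,1]$ whenever $p>\nu$, proving the first part. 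Minkowski's integral inequality then places $x\mapsto\int_0^1|\partial(u\circ r)/\partial s(s,x)|\,ds$ in $L^p(M^\ep)$, so $\Theta^M u(x)$ exists for almost every $x\in M^\ep$.

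For (\ref{theta_cont}) I apply the same strategy on $L^p(N^\eta)$, where $|x|=\eta$ upgrades the constant in the pointwise bound to $C\eta$; combining this with (\ref{ln}) and Minkowski yields
\[||\Theta^M u||_{L^p(N^\eta)}\lesssim \eta\int_0^1 s^{-\nu/p}\,||\partial u||_{L^p(N^{s\eta})}\,ds = \eta^{\nu/p}\int_0^\eta\zeta^{-\nu/p}\,||\partial u||_{L^p(N^\zeta)}\,d\zeta,\]
after the substitution $\zeta=s\eta$. The crucial step is H\"older's inequality with exponents $p$ and $p'$:
\[\int_0^\eta\zeta^{-\nu/p}\,||\partial u||_{L^p(N^\zeta)}\,d\zeta\le \left(\int_0^\eta\zeta^{-\nu/(p-1)}\,d\zeta\right)^{1/p'}\left(\int_0^\eta||\partial u||_{L^p(N^\zeta)}^p\,d\zeta\right)^{1/p}.\]
The first factor is finite exactly when $p>\nu+1$, in which case it equals a constant multiple of $\eta^{(p-1-\nu)/p}$; the second is bounded by $||\partial u||_{L^p(M^\eta)}$ via (\ref{eq_coarea_sph}). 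Multiplying the prefactor $\eta^{\nu/p}$ by $\eta^{(p-1-\nu)/p}$ makes the powers collapse to $\eta^{1-1/p}$, which is (\ref{theta_cont}).

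The main technical care lies in matching powers of $\eta$ and $s$ so that $\nu$ cancels out of the final exponent; this forces the substitution $\zeta=s\eta$ to precede the use of H\"older rather than follow it. The effective threshold is $p>\nu+1$, which is slightly sharper than the $p>\nu$ needed for the first assertion alone, and both are subsumed by the blanket phrase ``$p$ sufficiently large''.
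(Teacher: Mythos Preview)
Your proof is correct and follows essentially the same route as the paper's: Minkowski's integral inequality, the pointwise bound (\ref{eq_der_r_s}), the change-of-variable estimate (\ref{ln})/(\ref{lnm}), then H\"older with exponents $p,p'$ and (\ref{eq_coarea_sph}), yielding the same threshold $p>\nu+1$. Your closing remark that the substitution $\zeta=s\eta$ must precede H\"older is inessential: the paper applies H\"older first (getting $\int_0^1 s^{-\nu p'/p}\,ds$ as a constant) and substitutes afterwards, and the powers of $\eta$ match up just as well.
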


\begin{proof}
For  $u \in \W^{1,p}(M^\ep)$, we have:
\begin{eqnarray*}
\int_0^1||\frac{\partial (u\circ r_s)}{\partial s}||_{L^p(M^\ep)} ds&=& \int_0^1\left(\int_{M^\ep}\left |\frac{\partial (u\circ r)}{\partial s}(s,x)\right |^p\ dx \right)^{1/p} ds  \\
&\lesssim&\int_0^1\left(\int_{M^\ep} 
|\partial u(r_s(x))|^p\ dx \right)^{1/p} ds \quad\mbox{(since $r$ is Lipschitz)} \\
&\overset{(\ref{lnm})}{\lesssim}& \int_0^1 s^\frac{-\nu}{p}||\pa u||_{L^p (M^{s\ep})}  ds \\
&\le& ||\pa u||^p_{L^p (M^{\ep})}\int_0^1|s|^{-\nu /p}ds ,  
\end{eqnarray*}
which is  finite for $p>\nu$. That $\Theta^M u$ is well-defined now follows from Minkowski's integral inequality. This proves the first part of the Lemma.
For the second part, applying again  Minkowski's integral inequality, we can write:
\begin{eqnarray*}
||\Theta^M u||_{L^p(N^\eta)}
&\le&\int_0^1\left(\int_{N^\eta} 
|\partial u(r_s(x))|^p\left|\frac{\partial r}{\partial s}(s,x)\right |^p\ dx \right)^{1/p} ds \\
&\overset{(\ref{eq_der_r_s})}{\lesssim}& \eta\int_0^1\left(\int_{N^\eta} 
|\partial u(r_s(x))|^p\ dx \right)^{1/p} ds \\
&\overset{(\ref{ln})}{\lesssim}&\eta\int_0^1|s|^{-\nu/p}  
||\partial u||_{L^p(N^{s\eta})}ds\\
&\le& \eta\left(\int_0^1|s|^{-\nu p'/p}ds\right)^{1/p'}\left(\int_0^1 ||\partial u||_{L^p(N^{s\eta})}^pds\right)^{1/p}\\
&\lesssim& \eta\left(\int_0^1 ||\partial u||_{L^p(N^{s\eta})}^pds\right)^{1/p}\;\; \mbox{\rm (for \; $p>\nu p'$)}\\
&=& \eta^{1-1/p} \left(\int_0^\eta 
||\partial u||^p_{L^p(N^t)}dt\right)^{1/p}\quad (\mbox{setting $t:=s\eta$})\\
&\overset{(\ref{eq_coarea_sph})}{\lesssim}& \eta^{1-1/p} ||\partial u||_{L^p(M^\eta)}.
\end{eqnarray*}
\end{proof}
\begin{rem}
Observe that (\ref{theta_cont}) and (\ref{eq_coarea_sph}) yield that for $p$ large enough, and  $u$ and $\eta$ as in (\ref{theta_cont}), we have:
\begin{equation}\label{theta_contm}||\Theta^M u||_{L^p(M^\eta)}\lesssim\eta ||u||_{\W^{1,p}(M^\eta)}.\end{equation}
\end{rem}
\end{subsection}

\begin{subsection}{The operator $\Rr^M$.}\label{sect_r}  Set  now for $u \in\W^{1,p}(M^\ep)$ (and $p$ sufficiently large for $\Theta^M$ to be defined):
\begin{equation}\label{R}\Rr^M u:=u-\Theta^M u.\end{equation}

\begin{lem}\label{lem_cont}For $p$ sufficiently large, $\Rr^M u$ is constant on every connected component of $M^\ep$, for all $u\in \W^{1,p}(M^\ep)$. Moreover, $\Theta^M$ and $\Rr^M$ are then continuous projections and, in the case where $u$ extends to a continuous function on $\adh{M^\ep} $, we have $\Rr^M u\equiv u(0)$.   
\end{lem}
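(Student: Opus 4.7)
I will approximate $\Theta^M u$ by the truncated operators
$$\Theta^M_\delta u(x):=\int_\delta^1 \pa u(r_s(x))\cdot\frac{\pa r}{\pa s}(s,x)\,ds,\qquad \delta\in(0,1),$$
and use them to derive $W^{1,p}$ regularity and the identity $\pa\Theta^M u=\pa u$ for the limit. For smooth $u$, the fundamental theorem of calculus immediately gives $\Theta^M_\delta u(x)=u(x)-u(r_\delta(x))$, and the same identity will hold a.e.\ for $u\in W^{1,p}(M^\ep)$: by Theorem \ref{thm_local_conic_structure}(\ref{item_r_bi}), the cone parametrisation $(t,\xi)\mapsto r_t(\xi)$ is a bi-Lipschitz bijection from $[\delta,1]\times N^\ep$ onto an annular subset of $M^\ep$, so pulling back $u$ produces a $W^{1,p}$ function on a cylinder to which Fubini combined with the ACL characterisation applies. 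This yields the key identity
$$\Theta^M_\delta u(x)=u(x)-u(r_\delta(x))\quad\text{a.e.\ on }M^\ep,\qquad(*)$$
and since $r_\delta$ is bi-Lipschitz, $u\circ r_\delta\in W^{1,p}(M^\ep)$, so $\Theta^M_\delta u\in W^{1,p}(M^\ep)$ with $\pa\Theta^M_\delta u=\pa u-{}^{\mathbf{t}}Dr_\delta\cdot(\pa u)\circ r_\delta$.

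The next step is to let $\delta\to 0$. For $p>\nu$ (the exponent from (\ref{ln})), Minkowski's integral inequality combined with (\ref{eq_der_r_s}) and (\ref{lnm}) gives
$$\|\Theta^M u-\Theta^M_\delta u\|_{L^p(M^\ep)}\lesssim \ep\,\|\pa u\|_{L^p(M^\ep)}\int_0^\delta s^{-\nu/p}\,ds\longrightarrow 0\quad(\delta\to 0),$$
while Theorem \ref{thm_local_conic_structure}(\ref{item_H_bi}) together with (\ref{eq_jacr_s}) yields
$$\bigl\|{}^{\mathbf{t}}Dr_\delta\cdot(\pa u)\circ r_\delta\bigr\|_{L^p(M^\ep)}\le C\delta^{1-\nu/p}\|\pa u\|_{L^p(M^{\delta\ep})}\longrightarrow 0.$$
Consequently $\Theta^M_\delta u\to\Theta^M u$ in $W^{1,p}(M^\ep)$, so $\Theta^M u\in W^{1,p}(M^\ep)$ and $\pa\Theta^M u=\pa u$. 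Therefore $\pa\Rr^M u=0$, and $\Rr^M u$, being a Sobolev function with vanishing weak gradient on the open set $M^\ep$, is locally constant, i.e.\ constant on each connected component.

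The remaining claims follow at once. Continuity of $\Theta^M$ on $W^{1,p}(M^\ep)$ results from (\ref{theta_contm}) together with the equality $\|\pa\Theta^M u\|_{L^p(M^\ep)}=\|\pa u\|_{L^p(M^\ep)}$ just obtained, and $\Rr^M=\mathrm{Id}-\Theta^M$ is continuous by the same token. For the projection property, observe that if $c$ is locally constant on $M^\ep$ then $\Theta^M c=0$ directly from its definition (the integrand vanishes), so $\Rr^M(\Rr^M u)=\Rr^M u$ and $\Theta^M(\Theta^M u)=\Theta^M u$. Lastly, if $u$ extends continuously to $\adh{M^\ep}$, the bound $|r_\delta(x)|=\delta|x|\to 0$ together with continuity of $u$ at $0$ yields $u(r_\delta(x))\to u(0)$ for every $x$; passing to $\delta\to 0$ in $(*)$ via the $L^p$ convergence above gives $\Theta^M u=u-u(0)$ a.e., hence $\Rr^M u\equiv u(0)$. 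The main technical hurdle will be the rigorous justification of $(*)$, which relies crucially on Theorem \ref{thm_local_conic_structure}(\ref{item_r_bi}) to apply the ACL property on the cylinder parametrisation while staying away from the singular point $0$.
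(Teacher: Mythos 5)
Your proposal follows essentially the same route as the paper: truncate the integral at $s=\delta$, identify $\Theta^M_\delta u=u-u\circ r_\delta$, use (\ref{eq_der_r_s}), (\ref{lnm}) and Minkowski's inequality to get $\Theta^M_\delta u\to\Theta^M u$ in $L^p$ while $\|{}^{\mathbf t}Dr_\delta(\pa u\circ r_\delta)\|_{L^p}\to 0$, conclude $\pa \Rr^M u=0$, and then obtain continuity via (\ref{theta_contm}), the projection property from $\Theta^M(\Rr^M u)=0$, and $\Rr^M u\equiv u(0)$ by passing to the limit in the truncated identity — exactly as in the paper. The only divergence is that the paper establishes your identity $(*)$ only for $u\in\W^{1,p}(M^\ep)\cap\Cc^\infty(M^\ep)$, where it is just the fundamental theorem of calculus, and extends the conclusions by density and boundedness, whereas you justify $(*)$ directly for Sobolev $u$ via the ACL property on the bi-Lipschitz cone parametrisation; this is more work (and needs to be carried out on annuli $\{\tau\le|x|\le\ep\}$ for all small $\tau$, not just $\tau=\delta\ep$, to cover a.e.\ $x\in M^\ep$), but it is a valid alternative to the density step.
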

\begin{proof} 
Take $p$ sufficiently large for Lemma \ref{theta_ver} to hold, and, for $u\in \W^{1,p}(M^\ep)\cap \Cc^\infty(M^\ep)$, $t\in (0,1)$, as well as $x\in M^\ep$, set  \begin{equation*}\label{theta}\Theta^M_t u(x):= \int_t^1\frac{\partial \left(u\circ r\right)}{\partial s}(s,x)\ ds=u(x)-u(r_t(x)).
\end{equation*}By Minkowski's integral inequality, we have:
\begin{equation}\label{eq_theta_t}
 ||\Theta^M u -\Theta^M_t u||_{L^p(M^\ep)} \le \int_0 ^t ||\frac{\pa (u\circ r_s)}{\pa s} ||_{L^p(M^\ep)}ds,
\end{equation}
which, by Lemma \ref{theta_ver}, must tend to $0$ as $t$ goes to $0$.
Observe also that since $u-\Theta^M_t u=u\circ r_t$, we have for $t\in (0,1)$:
\begin{equation}\label{eq_pa_u_theta}|\pa(u-\Theta ^M_t u)|=|^{\bf t}Dr_t \left(\pa  u\circ r_t\right)|\le C t |\pa u\circ r_t|.\end{equation}                                                                                                                                
By (\ref{lnm}), the $L^p$-norm of the right-hand-side goes to $0$ as $t$ goes to $0$, for all $p$ large. 
But since (\ref{eq_theta_t}) yields that $(u-\Theta^M_t u)$ converges to $(u-\Theta^M u)$ in the $L^p$-norm, we now can write: $$\partial (\Rr^Mu)=\partial (u-\Theta^Mu)=\lim_{t\to 0}\pa(u-\Theta ^M_t u)  \overset{(\ref{eq_pa_u_theta})}{=}0,$$ where the limit is taken with respect to the $L^p$ norm. 
In particular, $\Rr^Mu$ and $\Theta^Mu$ belong to $\W^{1,p}(M^\ep)$ (for $p$ large).
It also means that $\partial \Theta^Mu=\partial u$, and, thanks to (\ref{theta_contm}), we derive that $\Theta^M$ is a continuous operator on $\W^{1,p}(M^\ep)$. Remark then that $\Theta^M \Rr^M u=0$, so that, applying $\Theta^M$ to (\ref{R}), we see that $\Theta^M$ is a projection.
 Finally, in the case where $u$ extends continuously on $\adh{M^\ep}$, thanks to the definition of $\Theta^M_t$ and (\ref{eq_theta_t}), we see that $\Theta^M u=u-u(0)$. 
\end{proof}
\end{subsection}
  
\begin{subsection}{The trace on a normal manifold.}\label{sect_trace} 
\begin{dfn}\label{dfn_embedded}
We say that $M$ is {\bf connected at $x\in \delta M$}\index{connected at $x$} if  $\bou(x,\ep)\cap M$ is connected for all  $\ep>0$ small enough.
We say that $M$ is {\bf normal} if it is connected at each $x\in \delta M$.
\end{dfn}

Our constructions will be inductive on the dimension of the strata that meet the support of the considered function, which requires to regard the support as a subset of $\adh{M}$. We start by introducing the necessary notations on this issue.

Let $U$ be an open subset of $M$ and let $V$ be an open subset of $\mba$ satisfying $V\cap M=U$. For a distribution $u$ on $U$, we define
$\supp_V u$ ({\bf the support of $u$ in $V$}) as
 the closure in $V$ of the set $\supp \, u$.  When $\supp_V u$ is compact, we will say that $u$ has {\bf compact support in $V$}. 
 
 If $ B$ is an open subset of $\mba$ containing $M$, regarding the elements of $\Cc^\infty(\mba)$ as functions on $M$, we then can set $$\Cc^\infty_{B}(\mba):=\{u\in \Cc^\infty(\overline{M}): \supp_B u \;\mbox{ is compact} \}.$$
  Observe that, as $M$ is bounded, given an $L^p$ function $u:M\to \R$,  $\supp_\mba u\cap \delta M=\emptyset$ if and only if $u$ is compactly supported (in $M$).

\begin{thm}\label{thm_trace}
Assume that $M$ is normal and let $A$ be a subanalytic subset of $\delta M$. For all $p\in [1,\infty)$ sufficiently large, we have:
\begin{enumerate}[(i)]
\item
$\Cc^\infty(\adh{M})$ is dense in $\W^{1,p}(M)$.
\item The linear operator
\begin{equation*}\label{trace}
\Cc^\infty(\adh{M})\ni \varphi \mapsto \varphi_{|A}\in L^p(A,\hn^k), \qquad k:=\dim A,
\end{equation*}
is bounded for $||\cdot ||_{\W^{1,p}(M)}$ and thus extends to a mapping $\tra_A:\W^{1,p}(M)\to L^p(A,\hn^k)$.
\item If  $\mathcal{S}$ is a stratification of $A$, then $\Cc^\infty_{\adh{M}\setminus\adh{A}}(\mba)$ is a dense subspace of
$\bigcap\limits_{Y\in\mathcal{S}}\ker \tra_Y$.

\end{enumerate}

\end{thm}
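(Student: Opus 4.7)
My plan is to handle the three assertions by a joint induction on the dimension of the strata of $\delta M$, using the operators $\Rr^M$ and $\Theta^M$ built in subsections \ref{sect_theta}--\ref{sect_r} together with the local bi-Lipschitz trivializations provided by a Whitney $(a)$-regular, locally bi-Lipschitz trivial stratification $\Sigma$ of $\mba$ compatible with $M$ and $A$ (Theorem \ref{pro_existence_stratifications} and Remark \ref{rem_pi_S}). The central smoothing device, to be applied near each singular stratum of $\delta M$, takes the form
\[
u_\eta := \chi_\eta u + (1-\chi_\eta)\, c,
\]
where $c$ is the locally constant value $\Rr^M u$ on $M^\ep$ (a single constant because $M$ is normal, by Lemma \ref{lem_cont}) and $\chi_\eta$ is a smooth cutoff vanishing on $M^{\eta/2}$ and equal to $1$ outside $M^\eta$. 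Since $c$ is constant, $u_\eta$ extends smoothly across the singular point, and the estimates $\|u-c\|_{L^p(M^\eta)} = \|\Theta^M u\|_{L^p(M^\eta)} \lesssim \eta\, \|u\|_{\W^{1,p}(M^\eta)}$ (the content of (\ref{theta_contm})) and $|\pa \chi_\eta| \lesssim 1/\eta$ together give $\|u - u_\eta\|_{\W^{1,p}(M)} \to 0$ as $\eta \to 0$.

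For the base case of a $0$-dimensional stratum $\{x_0\} \subset \delta M$, normality and Lemma \ref{lem_cont} deliver (i) directly through the construction above (applied after first approximating $u$ on $M$ by a function of $\Cc^\infty(M)$, by the classical density result). For (ii), the identity $\varphi(x_0)=c$ for $\varphi \in \Cc^\infty(\mba)$ (Lemma \ref{lem_cont}) together with the $\W^{1,p}$-continuity of $\Rr^M$ yields
\[
|\varphi(x_0)|^p \hn^m(M^\ep) = \|\Rr^M \varphi\|_{L^p(M^\ep)}^p \lesssim \|\varphi\|_{\W^{1,p}(M^\ep)}^p.
\]
For (iii), the hypothesis $\tra_{\{x_0\}} u = 0$ forces $c=0$, so $u_\eta$ vanishes identically on $M^{\eta/2}$. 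For a stratum $S \subset \delta M$ of dimension $k>0$ and $x_0\in S$, I would apply the bi-Lipschitz trivialization $\Lambda : U \cap \mba \to (\pi^{-1}(x_0)\cap\mba) \times W$ to transport the problem to a product $F \times W$, where $F = \pi^{-1}(x_0)\cap M$ is a normal subanalytic submanifold of lower ambient dimension (normality of $F$ being inherited from that of $M$ through the trivialization). The induction hypothesis applied fiberwise to $F$ gives the pointwise inequality $|\varphi(0_F,y)|^p \lesssim \|\varphi(\cdot,y)\|_{\W^{1,p}(F)}^p$ for $\hn^k$-a.e.\ $y\in W$; integrating over $W$ and pulling back through $\Lambda$ (which is bi-Lipschitz, hence preserves $\hn^k$ up to a bounded factor) yields (ii). For (i) and (iii), parametric versions $\Rr^F_y$, $\Theta^F_y$ of the operators produce a function $c(y)$ on $W$, and the analogue of $u_\eta$ with $\chi_\eta$ now a cutoff in the fiber-radial direction gives the smoothing; when the trace on $S$ vanishes, $c(y)\equiv 0$ and the approximations vanish near $S$.

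The main obstacle is the patching step: a correction made near a low-dimensional stratum $T$ contained in the closure of a higher-dimensional stratum $S$ must remain compatible with the later correction at $S$. I would address this by processing strata of $\delta M$ in order of increasing dimension, via a finite partition of unity subordinate to trivialization neighborhoods, so that after treating dimensions $<\ell$ the function is already smooth along those strata and the next correction localizes in a single chart; one checks that after each correction the modified function still lies in $\W^{1,p}(M)$ so that the induction may continue. A second subtle point is the identification, in the inductive step, of the parametric constant $c(y)$ with $\tra_S u(y)$ for $\hn^k$-a.e.\ $y\in W$, which is what makes $\tra_S u = 0$ force $c \equiv 0$ in the proof of (iii); this reduces to a Lebesgue-differentiation argument in the base $W$ combined with the $\W^{1,p}$-continuity of $\Rr^F$ in the fiber.
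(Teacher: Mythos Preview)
Your scheme for the $0$-dimensional step and the overall inductive architecture (localize by a partition of unity, split $u=\Rr^M u+\Theta^M u$, cut off $\Theta^M u$, then pass to strata of higher dimension) is indeed the paper's strategy. Your argument for $(ii)$ via a fiberwise application of the $0$-dimensional bound is somewhat different from the paper's induction on $m=\dim M$ along the spheres $N^\ep$, but it can be made to work once you observe that covering $\adh{Y}$ forces you to trivialize also at points of $\adh{Y}\setminus Y$, where the fiber trace set is no longer a single point and you are thrown back on an induction on $\dim M$ anyway.

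There is, however, a genuine gap in your treatment of $(i)$ at strata $S$ with $\dim S=k>0$. Your approximation
\[
u_\eta(x,y)=\chi_\eta(|x|)\,u(x,y)+\bigl(1-\chi_\eta(|x|)\bigr)\,c(y),\qquad c(y)=\Rr^F u(\cdot,y),
\]
coincides with $c(y)$ on a neighborhood of $S$. But $c$ is merely an $L^p$ function of $y\in W$: nothing forces it to be smooth, and in fact $\partial_y(\Theta^F u^y)$ involves second derivatives of $u$, so one cannot even place $c$ in $\W^{1,p}(W)$ in general. Consequently $u_\eta$ is \emph{not} smooth along $S$, its support in $\mba$ still meets $S$, $\kappa_{u_\eta}=k$, and the induction does not advance; the sentence ``after treating dimensions $<\ell$ the function is already smooth along those strata'' fails exactly here. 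The paper's remedy is to mollify first in the $W$-direction: with $w_\sigma=\Phi_\sigma u'$ one has the commutation $\tilde\Theta\Phi_\sigma=\Phi_\sigma\tilde\Theta$, so that $w_\sigma=\Phi_\sigma c(y)+\tilde\Theta w_\sigma$, where now $\Phi_\sigma c$ is $\Cc^\infty$ in $y$ and constant in $x$, hence pulls back through the \emph{smooth} retraction $\pi$ (rather than the merely bi-Lipschitz $\Lambda$) to an element of $\Cc^\infty(\mba)$; the remaining piece $\tilde\Theta w_\sigma$ is shown to lie in $\W^{1,p}$ (this is not automatic for $\tilde\Theta u'$ itself) and is then cut off as you propose. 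Since your proof of $(iii)$ ultimately relies on $(i)$ to produce smooth approximations once the function has been made to vanish near $A$, the same gap propagates there; the paper also uses the mollifier $\Phi_\sigma$ in $(iii)$ to establish rigorously that the fiberwise constant vanishes, in lieu of the Lebesgue-differentiation argument you sketch.
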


\begin{proof} 
\noindent {\bf Proof of $(i)$.} 
Fix a locally bi-Lipschitz trivial stratification $\Sigma$ of $\adh{M}$ compatible with $\delta M$. 
 For $u\in \W^{1,p}(V\cap M)$, with $V$ an open subset of $\adh{M}$,  we define \begin{equation}\label{eq_kappa_u}\kappa_u: =\min\{\dim S\,:S\in\Sigma, \, S\subset \delta M,\, S\cap \supp_{V} u\neq\emptyset\}.\end{equation}
 
 The idea is to argue by decreasing induction on $\kappa_u$. Namely, we prove: 
 
 \noindent{$(\mathbf{A}_k).$} Let $V$ be as above and $V'$ be an open set satisfying $\adh{V'}\subset V$. Given $u\in  \W^{1,p}(V\cap M)$ satisfying $\kappa_u\ge k$, there is a family of functions $v_\mu\in \W^{1,p}(V\cap M)\cap \Cc^\infty(V)$, $\mu>0$, such that $||u-v_\mu||_{\W^{1,p}(V'\cap M)}$ tends to zero as $\mu$ goes to zero.
 
  It is well-known that functions that are compactly supported in $M$ can be approximated by smooth compactly supported functions, which already means that $(\mathbf{A}_m)$ holds true.
We thus fix $k< m$, assume that $(\mathbf{A}_j)$ holds for all $j>k$, and take a function $u\in  \W^{1,p}(V\cap M)$, $V\subset \adh{M}$ open, with $\kappa_u\ge k$. 

Every point $x_0\in \overline{M}$ admits a neighborhood $U_{x_0}$ in $\overline{M}$ on which we have a bi-Lipschitz trivialization of $\Sigma$.  As we can use a partition of unity, we may assume that $\supp_{V} u$ fits in $U_{x_0}$, for some $x_0\in \adh{M}$.

Let $S$ denote the stratum of $\Sigma$ that contains the point $x_0$.   Observe that if $\dim S< \kappa_u$ then the function $u$ is zero near $S$, so that in such a situation, choosing $U_{x_0}$ smaller if necessary, we can require the function $u$ to be zero on $U_{x_0}$.  Moreover, in the case where $\dim S>k$,  the result follows by our decreasing induction on $\kappa_u$ (since $\supp_{V}u\subset U_{x_0}$ which cannot meet any stratum of dimension smaller than $\dim S$), which means that we can assume that $\dim S\le k$. As $k\le \kappa_u$, we will thus assume that $\dim S=\kappa_u =k$ and, without loss of generality, we will do the proof for $x_0=0_{\R^n}\in \delta M$.

We start with the easier case $k=0$, i.e.,  we assume for the moment that $S=\{0_{\R^n}\}$. We will also suppose that $\supp_{V} u\subset \bou (0_{\R^n},\ep)$, with $\ep$  sufficiently small for the operators $\Theta^M$ and $\Rr^M$ to be defined on $M^\ep$ (refining our partition of unity if necessary).
                       
As $M$ is connected at $0_{\R^n}$, $\Rr^Mu $ is constant on $M^\ep$ (see Lemma \ref{lem_cont}), which implies that it is a smooth function. It thus suffices to approximate $\Theta^M u$. 

Let $\psi:\R \to [0,1]$  be a $\Cc^\infty$ function  
such that $\psi\equiv 0$ on $(-\infty,\frac{1}{2})$ and $\psi\equiv 1$ near $[1,+\infty)$, and set for $\mu$ positive and $x\in M^\ep$, $\psi_\mu(x):=\psi\left(\frac{|x|^2}{\mu^2}\right)$.  Note that, as $\psi_\mu$ is bounded and tends to $1$, the function  $\psi_\mu\cdot \pa u$  tends to $\pa u$ in the $L^p$-norm as $\mu$ goes to $0$.  As a matter of fact, if we write
\begin{equation}\label{eq_v_mu_cv}|| \pa (\psi_\mu \Theta^M u)-\pa u||_{L^p(M^\ep)}\le || \Theta^M u\cdot \pa \psi_\mu ||_{L^p(M^\ep)}+||( \psi_\mu-1) \pa u||_{L^p(M^\ep)},\end{equation} 
by (\ref{theta_contm}), an easy computation shows that $v_\mu:=\psi_\mu\cdot \Theta^M u$  tends to $\Theta^M u$ in $\W^{1,p}(M^\ep)$ for all $p$ large enough (since $\sup_{x\in M} |\pa \psi_\mu(x)|\lesssim 1/\mu$ and $\supp_{M} \pa \psi_\mu\subset M^{\mu}$). As $ \kappa_{v_\mu} >0$, this completes the induction step in the case $k=0$. 

We now deal with the case $k>0$ that we shall address with a similar argument. The situation is however more complicated and we shall need to apply a mollifier along the stratum $S$.

  By definition of bi-Lipschitz triviality, there is a bi-Lipschitz homeomorphism $\Lambda: U_0 \to (\pi^{-1}(0_{\R^n})\cap \adh{M})\times W_0$ (here $U_0=U_{0_{\R^n}}$), where $\pi:U_0 \to S$ is a smooth retraction and $W_0$ is a neighborhood of the origin in $S$ that, up to a coordinate system of $S$, we can identify with $\bou(0_{\R^k},\alpha)$, $\alpha>0$. Notice that the sets $$F:=\pi^{-1}(0_{\R^n}) \cap M \et U':=F\times \bou(0_{\R^k},\alpha) $$  are smooth manifolds (see Remark \ref{rem_pi_S} (2)).


For $v\in L^p(U')$, we define two families of functions on $F$ and $\bou(0_{\R^k},\alpha)$ respectively, parameterized by $\bou(0_{\R^k},\alpha)$ and $F$  respectively,  by setting for almost every $(x,y)\in U'$ $$v_x(y)=v^y(x):=v(x,y).$$ Next, we define a mollifying operator $\Phi_\sigma $, by setting for $\sigma>0$ small and  $v\in L^p(U')$: 
\begin{equation}\label{eq_Phi_sigma_def}
\Phi_\sigma v(x,y)=v_x\ast \varphi_\sigma(y)=\int_{\bou(0_{\R^k},1)} v_x(y-z)\varphi_\sigma(z)\ dz,
\end{equation}
where $\varphi_\sigma(z)=\frac{1}{\sigma^k}\varphi(\frac{z}{\sigma})$, with $\varphi:\bou(0_{\R^k},1)\to \R$ a compactly supported function satisfying $\int \varphi =1$.

Let us also set for $v\in \W^{1,p}(F\times\bou(0_{\R^k},\alpha))$
\begin{equation}\label{eq_tet}
\tilde{\Theta}v(x,y)=\Theta^Fv^y(x),
\end{equation}where $\Theta^F$ is the operator defined in section \ref{sect_theta}, for the manifold $F$ (we assume here that $U_0$ is sufficiently small for $\Theta^F$ to be defined). 

In order to generalize the argument we used in the case $k=0$, we first check that $\tilde{\Theta}$ satisfies a similar inequality as $\Theta^F$  (see (\ref{theta_contm})). Indeed, if we set $F^\eta:=F\cap \bou(0_{\R^n},\eta)$,  then, by (\ref{theta_contm}) for $\Theta^F$, we see that there is a constant $C$ (independent of $y$) such that for $v\in \W^{1,p}(F\times \bou(0_{\R^k},\alpha))$ (for all $p$ large enough)
\begin{equation*}
||(\tilde{\Theta} v)^y||^p_{L^p(F^\eta)}\le C  \eta^{p}|| v^y||^p_{\W^{1,p}(F^\eta)}.
\end{equation*}
Integrating with respect to $y$, we deduce  that
\begin{equation}\label{eq_theta_y_U} ||\tilde{\Theta} v||_{L^p(E^\eta)} \lesssim \eta || v||_{\W^{1,p}(E^\eta)},   \end{equation}
where $E^\eta=F^\eta \times \bou(0_{\R^k},\alpha)$.

By Fubini's Theorem, we have for all $p$ large enough:
\begin{equation}\label{eq_commute}
\tilde{\Theta}\Phi_\sigma=\Phi_\sigma\tilde{\Theta}.
\end{equation}
We now claim that this entails that $\tilde{\Theta} \Phi_\sigma v$ belongs to $\W^{1,p}(U')$ for every $v$ in this space.  Indeed, thanks to Young's inequality, we see that (\ref{eq_theta_y_U}) entails that $\tilde{\Theta}\Phi_\sigma v$ is $L^p$. Moreover,  by (\ref{eq_commute}) $$\frac{\pa}{\pa y_i} \tilde{\Theta} \Phi_\sigma v(x,y)=\frac{\pa}{\pa y_i} \Phi_\sigma  \tilde{\Theta}v(x,y)=\left( (\tilde{\Theta}v)_x * \frac{\pa \varphi_\sigma}{\pa y_i}\right)(y),$$
which belongs to $L^p(U')$. Furthermore,  as $\tet \Phi_\sigma v(x,y)=\Theta^F (\Phi_\sigma v)^y(x)$, Lemma \ref{lem_cont} entails that the partial derivative of $\tet \Phi_\sigma v$ with respect to $x$ is also $L^p$, yielding the claimed fact.   

As $\Lambda$ is bi-Lipschitz, the function $u':=u\circ \Lambda^{-1}$ belongs to $\W^{1,p}(U')$. By (\ref{eq_commute}), we thus can write 
\begin{eqnarray}\label{eq_phi_sigma_uba}
\Phi_\sigma u'=\Phi_\sigma(u'-\tilde{\Theta}u')+\tilde{\Theta}\Phi_\sigma u'.
\end{eqnarray}
Let $f_\sigma$ and $g_\sigma$ respectively denote the first and second term composing the right-hand-side.

We first check that $f_\sigma \circ \Lambda$ is smooth. Since $M$ is normal, by Lemma \ref{lem_cont}, $f_\sigma (x,y)$ is constant with respect to $x$ and, as this function is smooth with respect to $y$, it induces a smooth function $\tilde{f}$ on $W_0$. As a matter of fact, since $\pi(\Lambda^{-1}(x,y))=y$, we see that $f_\sigma\circ \Lambda(z)= \tilde{f}\circ \pi(z)$ is smooth.

But since $\Phi_\sigma u'$ tends to $u'$ in $\W^{1,p} (U')$ (as $\sigma$ goes to zero), thanks to (\ref{eq_phi_sigma_uba}), it means that we just have to find arbitrarily close smooth approximations of  $g_\sigma\circ \Lambda$ in $\W^{1,p}(U_0)$ (for each $\sigma>0$ small), which, thanks to our induction hypothesis, reduces to checking that there are arbitrarily close approximations $v$ of $g_\sigma$ (in $\W^{1,p}(U')$) satisfying $\kappa_v>k$.

For this purpose, fix $\sigma>0$ and let for $\mu$ positive $\psit_\mu$  denote the ``cut-off function'' defined by $\psit_\mu(x,y)=\psi(\frac{|x|^2}{\mu^2})$, for $(x,y)\in U'$, where $\psi:\R\to [0,1]$ is (as in the case $k=0$) a $\Cc^\infty$ function  
such that $\psi\equiv 0$ on $(-\infty,\frac{1}{2})$ and $\psi\equiv 1$ near $[1,+\infty)$.  We claim that $\tilde{v}_\mu:=\psit_\mu \cdot g_\sigma $ is the desired sequence of approximations of $ g_\sigma $. 

Indeed, as $\tilde{v}_\mu$ tends to $g_\sigma$ in the $L^p$ norm, it actually suffices to establish the convergence of the first derivative. Remark that  since $\supp_{U'} |\pa \psit_\mu| \subset E^{\mu}$ and $\sup_{U'} |\pa \tilde{\psi}_\mu|\lesssim \frac{1}{\mu}$, we have (for all $p$ large enough)
$$||g_\sigma\cdot \pa \psit_\mu  ||_{L^p(U')}  \lesssim \frac{||g_\sigma  ||_{L^p(E^{\mu})} }{\mu}\overset{(\ref{eq_theta_y_U})}{\lesssim} ||\Phi_\sigma u'  ||_{\W^{1,p}(E^{\mu})},$$
which tends to zero as $\mu$ goes to zero. Hence,
\begin{equation}\label{eq_cv_vtilda}||\pa \tilde{v}_\mu -\pa g_\sigma||_{L^p(U')}\le  ||g_\sigma\cdot \pa \psit_\mu  ||_{L^p(U')} +||(1 -\psit_\mu) \cdot \pa g_\sigma||_{L^p(U')}\end{equation}                                                                                                                                                   
also tends to zero, yielding $(i)$.

\medskip

\noindent {\bf Proof of $(ii)$.} We  argue by induction on $m$ (the case $m=0$ follows from Lemma \ref{lem_cont}).
Fix a subanalytic set $A\subset \delta M $ of dimension $k$, and let $\Sigma$ be a stratification of $\adh{M}$ compatible with $A$ and $\delta M$.  
 Observe that if we show the result for every stratum $Y\subset A$ of dimension $k$, this will yield the result for $A$. Since we can work up to a partition of unity, we will work in the vicinity of a point of $\adh{Y}$, $Y\subset A$  stratum, that we will assume to be the origin. We set $Y^\eta:=\bou(0,\eta)\cap Y$, for $\eta\le\ep$.
 
 Let $u\in \Cc^0(\adh{M})\cap \W^{1,p}(M)$, with $\supp_{\adh{M}} u\subset \bou(0_{\R^n},\ep)$, where $\ep>0$ is sufficiently small for the operators $\Rr^M$ and $\Theta^M$ to be well-defined.  As $\Rr^M$ is a bounded operator and because $\Rr^M u$ is a constant function,  we clearly have  (for all $p$ large enough)$$||\Rr^M u||_{L^p(Y^{\ep})} \lesssim ||u||_{\W^{1,p}(M^\ep)}.$$ We thus simply have to focus on $\Theta^M u$, which means (since $\Theta^M$ is a projection) that we can assume that  $u=\Theta^M u$.   Notice first that we thus have for all $s\in (0,1)$ (and $p$ large):$$|| u\circ r_s||^p_{L^p (N^\ep)}\overset{(\ref{ln})}{\lesssim}
  s^{-\nu}||u||_{L^p(N^{s\ep}) } ^p \overset{(\ref{theta_cont})}{\lesssim}s^{p-1-\nu} ||u||_{\W^{1,p}(M^{s\ep})} ^p.$$
Moreover,  we have for all $s\in (0,1)$ 
$$||\pa (u\circ r_s)||_{L^p (N^\ep)}^p= ||^{\mathbf{t}} Dr_s (\pa u\circ r_s)||_{L^p (N^\ep)}^p\lesssim s^p ||\pa u\circ r_s||_{L^p (N^\ep)}^p      \overset{(\ref{ln})}{\lesssim}s^{p-\nu} ||\pa u||^p_{L^p (N^{s\ep})} .
$$
These two estimates give for $\eta<\ep$ and $p$ sufficiently large (using (\ref{eq_coarea_sph}))
\begin{equation}\label{eq_u_circ_r_s}
 \int_0 ^{\frac{\eta}{\ep}}||u\circ r_s||_{\W^{1,p} (N^\ep)}^p ds\lesssim \eta^{p-\nu}  ||u||^p_{\W^{1,p}(M^{\eta})}.
\end{equation}

 Given $\eta\le \ep$, we set for simplicity $Z^\eta:= \sph(0,\eta)\cap Y$. Observe that since $N^\ep$ has dimension $(m-1)$, the induction hypothesis implies that there is a constant $C$ such that for all $v \in \W^{1,p}(N^\ep)\cap \Cc^0(\adh{N^\ep})$  we have (for $p$ large enough)  \begin{equation}\label{eq_trace_rec}
||v||_{L^p(Z^\ep)}\le  C ||v||_{\W^{1,p}(N^\ep)}.                                                                                                                                                                                                                      \end{equation}
The mapping  $r_s$ can be required to preserve the strata (see Remark 2.6 of \cite{gvpoincare}), which now makes it possible to
 estimate the $L^p$ norm of the trace on $Y$ as follows:
$$||u||_{L^p(Y^\ep)}^p\overset{(\ref{eq_coarea_sph})}{\lesssim} \int_0 ^\ep ||u||^p_{L^p(Z^\zeta)}d\zeta \lesssim  \int_0 ^1 ||u\circ r_s||^p_{L^p(Z^{\ep})}ds \overset{(\ref{eq_trace_rec})}{\lesssim}\int_0^1 ||u\circ r_s||^p_{\W^{1,p}(N^\ep)} ds,  $$
which, by (\ref{eq_u_circ_r_s}) (for $\eta=\ep$), gives the desired bound for $||u||_{L^p(Y^\ep)}$. 

Remark that (this will be needed to prove $(iii)$) for $0<\eta<\ep$, integrating on $(0,\frac{\eta}{\ep})$ (instead of $(0,1)$),  the just above computation actually yields the more general estimate  (assuming as above $\Theta^M u=u$):\begin{equation}\label{eq_norme_trace_local}
      ||u||_{L^p(Y^\eta)}^p \lesssim  \int_0 ^\frac{\eta}{\ep} ||u\circ r_s||^p_{L^p(Z^{\ep})}ds  \overset{(\ref{eq_trace_rec})}{\lesssim} \int_0^{\frac{\eta}{\ep}} ||u\circ r_s||^p_{\W^{1,p}(N^\ep)} ds    \overset{(\ref{eq_u_circ_r_s})}{\lesssim}    \eta^{p-\nu}  ||u||^p_{\W^{1,p}(M^{\eta})}.                                                                                                                                                             \end{equation}


\medskip

\noindent {\bf Proof of $(iii)$.} Fix  a  stratification $\mathcal{S}$ of $A\subset \delta M$. 
Take then a locally bi-Lipschitz trivial stratification $\Sigma$ of $\adh{M}$, compatible with   $\delta M$ as well as with all the strata of $\mathcal{S}$. Let $\Sigma_A$ be the stratification of $A$ induced by $\Sigma$ and denote by $\Sigma'_A$ the collection of all the elements of $\Sigma_A$ that are maximal for the partial order relation: $Y\preceq Y'$ if and only if $Y\subset \adh{Y'}$. Note that $\Sigma'_A$ is a stratification of a dense subset of $A$ which is open in $A$.

By definition of the trace, we have $\bigcap_{Y\in \mathcal{S}} \ker \tra_Y\subset \bigcap_{Y\in \Sigma'_A} \ker \tra_Y$.
 We are going to show that we can approximate every  function $u  \in \bigcap_{Y\in \Sigma_A'} \ker \tra_Y$ by decreasing induction on $\kappa_u$ (see (\ref{eq_kappa_u}) for $\kappa_u$). Take for this purpose such a function $u:M\to \R$ and suppose  the result true for every  $v\in \bigcap_{Y\in \Sigma'_ A} \ker \tra_Y$ satisfying $\kappa_v>\kappa_u$ (the case $\kappa_u=m$ is well-known).


As in the proof of $(i)$,  we will work near  $x_0=0_{\R^n}\in \delta M$, assuming $k:=\dim S=\kappa_u<m$, where $S$ is the stratum of $\Sigma$ containing $0_{\R^n}$.  We will also suppose that $\supp_{\mba}u\subset  U_{0}$, where $U_{0}$ is a neighborhood of $0_{\R^n}$ on which we have a bi-Lipschitz trivialization of $\Sigma$, and that $0_{\R^n}\in \adh{A}$  (since otherwise the desired fact follows from assertion $(i)$). Note  that there is a stratum $Y\in \Sigma_A'$ that contains $S$ in its closure.

We start with the easier case $\dim S=0$. 
Since $\tra_Y u=0$,  by definition of $\Rr^M$, we see that $\tra_Y \Rr^Mu=-\tra_Y \Theta^M u $, which  implies that  
\begin{equation*}
||\tra_Y \Rr^M u||_{L^p(Y^\eta)}^p= ||\tra_Y \Theta^M u||_{L^p(Y^\eta)}^p\overset{(\ref{eq_norme_trace_local})}{\lesssim}   \eta^{p-\nu}  ||\Theta^M u||^p_{\W^{1,p}(M^{\eta})}.
\end{equation*}
As $\Rr^M u$ is constant, for $p$ sufficiently large, this clearly entails that  $\Rr^M u$ is zero (for $p$ sufficiently large we have $\eta^{p-\nu} \lesssim \hn^l(Y^\eta)$, where $l=\dim Y$, see for instance \cite{lr} or \cite[Proposition 5.1.4]{livre}), which means that $u=\Theta^M u$. Consequently (see (\ref{eq_v_mu_cv})), $$v_\mu:=\psi_\mu \cdot \Theta^M u=\psi_\mu\cdot u$$ tends  to $u$ in $\W^{1,p}(M)$ as $\mu\in(0,1)$ tends to zero  (here $\psi_\mu$ is as in the proof of $(i)$). As $\kappa_{v_\mu}>0$ and $\tra_{Y'} v_\mu=\psi_\mu\cdot\tra_{Y'} u = 0$ for all $Y'\in \Sigma'_ A$ and all $\mu>0$ small, we see that the induction hypothesis ensures that we can find suitable approximations of every  $v_\mu$, which completes the induction step in the case $k=0$.

It remains the case $k>0$ that we are going to address with the same argument, using the operator $\tet$ that we constructed in the proof of $(i)$.  As $\Sigma$ is a locally bi-Lipschitz trivial stratification, there exists a bi-Lipschitz homeomorphism $\Lambda:U_{0}\to (\pi^{-1}(0_{\R^n})\cap \adh{M})\times \bou(0_{\R^k},\alpha)$, where $\pi$ is a smooth retraction onto $S$ and $\alpha>0$. 

For $v\in \W^{1,p}(F\times \bou(0_{\R^k},\alpha))$, set  $\tilde{\Rr}v(x,y):=\Rr^F v^y(x)$, where (as above) $F=\pi^{-1}(0_{\R^n})\cap M$, and remark that, if $\tet$ is as defined in (\ref{eq_tet}), we have: 
 \begin{equation}\label{eq_u_tet}v=\tet v +\tilde{\Rr} v.\end{equation}

 We claim that if we set $$u':=u\circ \Lambda^{-1}\et w_\sigma := \Phi_\sigma u'$$ (see (\ref{eq_Phi_sigma_def}) for $\Phi_\sigma$) 
then $\tilde{\Rr} w_\sigma=0$ for all $\sigma>0$ small. 
 Indeed,   
 by Lemma \ref{lem_cont}, $\tilde{\Rr}w_{\sigma}(x,y)$ is constant with respect to $x\in F$, and hence
 so is $\tra_Y \tilde{\Rr}w_{\sigma}(x,y)$.
 As a matter of fact, for all $\eta<\ep$ and all $x\in Y^\eta=\bou(0_{\R^n},\eta)\cap Y$, we have
 \begin{equation}\label{ntr1}
 ||\tra_Y\tilde{\Rr}w_\sigma||^p_{L^p(Y^\eta)}=\hn^{l-k}(Y^\eta\cap F)\cdot\int_{\bou(0_{\R^k},\alpha)}| \tilde{\Rr}w_\sigma|^p(x,y)\ dy,
 \end{equation} 
where $l=\dim Y$.   Moreover, it directly follows from (\ref{eq_norme_trace_local}) (for the manifold $F$) that  
                                      \begin{equation*}\label{eq_norme_trace_tet}
                                       ||\tra_Y\tet w_\sigma||_{L^p (Y^\eta)}^p \lesssim \eta^{p-\nu} ||\tet w_\sigma ||^p_{\W^{1,p} (E^\eta)} ,
                                      \end{equation*}
where we recall that $E^\eta= (F\cap \bou(0,\eta))\times \bou(0_{\R^k},\alpha)$.
As $\tra_Y w_\sigma=\Phi_\sigma \tra_Y u'=0$, thanks to  (\ref{eq_u_tet}), we deduce
 \begin{equation}\label{ntr2}
 ||\tra_Y \tilde{\Rr} w_\sigma||^p_{L^p(Y^\eta)}\lesssim \eta^{p-\nu}  ||\tet w_\sigma||^p_{\W^{1,p}(E^{\eta})}
\end{equation}
From (\ref{ntr1}) and (\ref{ntr2}), we conclude that $\tilde{\Rr}w_\sigma=0$ (for sufficiently large $p$, we have $ \eta^{p-\nu} \lesssim \hn^{l-k}(Y^\eta\cap F)$, again see \cite{lr} or \cite[Proposition 5.1.4]{livre}), as claimed.

Hence, $w_\sigma=\tet w_\sigma$, and 
consequently (see (\ref{eq_cv_vtilda})), for each $\sigma$,   $$\tilde{v}_\mu:=\psit_\mu\cdot w_\sigma=\psit_\mu \cdot \tet w_\sigma=\psit_\mu\cdot \tet \Phi_\sigma u'$$
 tends to $w_\sigma$ in $\W^{1,p}(M)$ as $\mu$ tends to zero. Since $\kappa_{\tilde{v}_\mu}> k$  and $\tra_{Y'} \tilde{v}_\mu=\psit_\mu\cdot\tra_{Y'} w_\sigma = 0$ for all $Y'\in \Sigma'_ A$, and as $w_\sigma$ tends to $u'$ as $\sigma$ goes to $0$, this completes the induction step.
\end{proof}
\begin{rem}\label{rem_addendum} \begin{enumerate}[(1)]
                                 \item \label{item_cont} The proof of $(ii)$ has actually established that, alike in the case of Lipschitz manifolds with boundary, if $u$ belongs to $\Cc^0(\adh{M})\cap \W^{1,p}(M)$ (and not necessarily to $\Cc^\infty(\adh{M})$) then $\tra_A u=u_{|A}$. 
                                \item\label{item_stra}  For  $(iii)$ of the above theorem to be true, it is not really necessary to require the manifold to be normal.  We have proved that it suffices that $M$ be connected at every point of  $\delta M\setminus \adh{A}$. It is as well worthy of notice that we do not need $\mathcal{S}$ to be a stratification of $A$, as it suffices to have a covering a dense subset of $A$ by some subanalytic sets $Y_0,\dots, Y_l$, $l=\dim A$, where for each $i$, $Y_i$ is either of pure dimension $i$ or empty.
                                \end{enumerate}

\end{rem}
\end{subsection}
\end{section}

\begin{section}{The general case}
\begin{subsection}{Normalizations} We introduce the  (subanalytic) $\Cc^\infty$ normalizations and show their existence and uniqueness. This will be needed to investigate the trace operator on non normal manifolds. This notion of $\Cc^\infty$ normalization is a natural counterpart of topological normalizations of pseudomanifolds \cite{mcc,ih1} in our framework.  

\begin{dfn}\label{dfn_normalization}
{\bf A $\Cc^\infty$ normalization of $M$ }
 is  a definable $\Cc^\infty$ diffeomorphism $h: \mc\to M$ satisfying $\sup_{x\in \mc} |D_x h|<\infty$ and  $\sup_{x\in M} |D_x h^{-1}|<\infty$,  with $\mc$ a normal $\Cc^\infty$ submanifold of $\R^k$, for some $k$. 
 \end{dfn}

 So far, we have regarded the manifold $M$ as endowed with the metric induced by the euclidean metric of $\R^n$. We can also regard it as a Riemannian manifold:   given $x$ and  $y$ in the same connected component of $M$, let $d_M(x,y)$ denote the length of the shortest $\Cc^1$ arc in $M$ joining $x$ and $y$. 
 
 This metric is sometimes referred as {\bf the inner metric} of $M$.
 Although it is  not always equivalent to the euclidean metric, 
it enjoys the following property that will be useful for our purpose. Let $(A_t)_{t\in \R^k}$ be a definable family of manifolds (in the sense that $\bigcup_{t\in \R^k} A_t\times \{t\}$ is a definable set and $A_t$ is a smooth manifold for every $t$).  There is a constant $C$ (independent of $t$)  such that for all $t\in \R^k$ we have for all $x$ and $y$ in the same connected component of $A_t$:
 \begin{equation}\label{eq_diam_inner}
  d_{A_t}(x,y)\le C \mbox{diam}(A_t),
 \end{equation}
where $\mbox{diam}(A_t)$ stands for the euclidean diameter of $A_t$, defined as $\mbox{diam}(A_t)=\sup\{ |a-b|:a\in A_t, b\in A_t\}$. See \cite[Corollary 1.3]{kp} (or \cite[Proposition 3.2.6]{livre})  for a proof.


 \begin{lem}\label{lem_local_normal_extension}
   Suppose that $M$ is connected at $x_0\in \delta M$ and let $f:M\to \R$ be a definable $\Cc^1$ function. If $\sup_{x\in M} |\pa f(x)|<\infty$  then $f$ extends continuously at $x_0$.  
  \end{lem}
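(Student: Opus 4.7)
The plan is to prove that $f$ satisfies the Cauchy criterion at $x_0$, which will give the continuous extension. The bounded gradient hypothesis turns $f$ into a Lipschitz function for the inner metric $d_M$, and the connectedness hypothesis together with the estimate (\ref{eq_diam_inner}) will let us bound the inner diameter of small balls.

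First I would set $C := \sup_{x \in M} |\partial f(x)|$ and observe that, for any $\Cc^1$ arc $\gamma:[0,L]\to M$ parameterized by arc length, the chain rule and fundamental theorem of calculus give $|f(\gamma(L))-f(\gamma(0))|\le C\cdot L$. Consequently, for any two points $x,y$ lying in the same connected open submanifold $U\subset M$, $|f(x)-f(y)|\le C\cdot d_U(x,y)$ (by taking an arc whose length approximates the inner distance $d_U(x,y)$).

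Next I would apply this to the definable family of open submanifolds $A_\eta := M\cap \bou(x_0,\eta)$, $\eta>0$. These are open in $M$, hence smooth submanifolds of the same dimension, and by the hypothesis that $M$ is connected at $x_0$, the set $A_\eta$ is connected for every sufficiently small $\eta$. Invoking (\ref{eq_diam_inner}) for this definable family yields a constant $C'$ (independent of $\eta$) such that for $\eta$ small and every $x,y\in A_\eta$,
\begin{equation*}
d_{A_\eta}(x,y)\le C'\,\mbox{diam}(A_\eta)\le 2C'\eta.
\end{equation*}
Combining the two displayed inequalities gives $|f(x)-f(y)|\le 2CC'\eta$ for all $x,y\in A_\eta$, with $\eta$ small. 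Hence the oscillation of $f$ on $A_\eta$ tends to $0$ as $\eta\to 0$, so for any sequence $(x_n)$ in $M$ with $x_n\to x_0$ the sequence $(f(x_n))$ is Cauchy, its limit does not depend on the chosen sequence, and this common limit furnishes a continuous extension of $f$ at $x_0$.

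The one delicate point is checking that (\ref{eq_diam_inner}) actually applies, i.e., that $(A_\eta)_{\eta>0}$ is a legitimate definable family of smooth manifolds to which the cited inner-diameter bound pertains; this is where the connectedness hypothesis is used in an essential way (otherwise $A_\eta$ would split into several components that could have much larger pairwise inner distance inside $M$, and the estimate would fail). Everything else is then a routine Lipschitz-along-arcs argument.
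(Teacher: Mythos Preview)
Your proof is correct and rests on the same key ingredient as the paper's, namely the inner-diameter estimate (\ref{eq_diam_inner}), but the execution is slightly different and in fact a bit more streamlined. The paper applies (\ref{eq_diam_inner}) to the family of \emph{spheres} $A_\ep=\sph(x_0,\ep)\cap M$, first invoking (\ref{eq_diam_inner}) to see that $f$ is bounded near $x_0$ so that limits along definable arcs exist, and then using the sphere estimate to show that any two definable arcs (parameterized by the distance to $x_0$) give the same limit. You instead apply (\ref{eq_diam_inner}) directly to the family of \emph{open balls} $A_\eta=\bou(x_0,\eta)\cap M$ and read off the Cauchy criterion. Your choice has the advantage that connectedness of $A_\eta$ is literally the hypothesis (Definition~\ref{dfn_embedded}), whereas connectedness of the sphere slices requires an extra word (it follows from the local conic structure); it also bypasses the intermediate step through definable arcs. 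The paper's route, on the other hand, fits the curve-selection viewpoint that pervades the subject and makes explicit why definability of $f$ matters for the existence of limits along arcs.
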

  \begin{proof} As $f$ has bounded derivative, by (\ref{eq_diam_inner}), it is bounded near $x_0$, and therefore  $\lim_{t \to 0} f(\gamma(t))$ exists for every definable arc $\gamma:(0,1)\to M$ tending to $x_0$ as $t$ goes to zero. It thus suffices to check that this limit is independent of the arc $\gamma$.
  
  Let for $\ep>0$, $A_\ep:= \sph(x_0,\ep)\cap M$ and observe that $A_\ep$ is connected for $\ep>0$ small.
By (\ref{eq_diam_inner}), there is a constant $C$ such that  for any $\ep>0$ small, $d_{A_\ep}(x,y)\le C\ep$, for all $x$ and $y$ in $A_\ep$. 
As $f$ has bounded derivative, this entails that $|f(x)-f(y)|\le C'\ep$, for all such $x$ and $y$, where $C'$ is a constant. Consequently, if $\gamma_1$ and $\gamma_2$ are two definable arcs in $M$  tending to $x_0$ (that we can parameterize by their distance to  $\xo$), $|f(\gamma_1(t))-f(\gamma_2(t))|$ tends to $0$ as $t$ goes to $0$.
   \end{proof}
 
Observe that if $h:\mc\to M$ is  a normalization and if $z\in \delta \mc$ then this lemma entails that $h$ induces a homeomorphism between the germ of $\mc$ at $z$ and a connected component of the germ of $M$ at $\lim_{x\to z} h(x)$.  It also yields:

  \begin{pro}\label{pro_normalization_uniqueness} Every $\Cc^\infty$ normalization  $h: \mc\to M$  extends continuously to a mapping from $\adh{\mc}$ to $\mba$.
    Moreover, if $h_1:\mc_1 \to M$ and $h_2:\mc_2 \to M$ are two $\Cc^\infty$ normalizations of $M$, then $h_2^{-1} h_1$  extends to a homeomorphism between $\adh{\mc_1}$ and $\adh{\mc_2}$.
  \end{pro}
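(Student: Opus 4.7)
The plan is to deduce both assertions directly from Lemma \ref{lem_local_normal_extension}, applied component-wise, exploiting that $\mc$ (resp.\ $\mc_i$) is normal and that the derivatives in play are globally bounded.

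For the first statement, I would write $h=(h_1,\dots,h_n):\mc\to M\subset \R^n$. Each coordinate $h_i$ is a definable $\Cc^\infty$ function on $\mc$ whose gradient is a row of $Dh$, so $\sup_{x\in\mc}|\pa h_i(x)|\le \sup_{x\in\mc}|D_xh|<\infty$. Since $\mc$ is normal, Lemma \ref{lem_local_normal_extension} applies at every $z\in \delta\mc$ and produces a continuous extension $\bar h_i:\adh{\mc}\to\R$. Assembling coordinates yields a continuous $\bar h:\adh{\mc}\to\R^n$. For any $z\in\delta\mc$ and any sequence $x_j\to z$ in $\mc$, $\bar h(z)=\lim h(x_j)\in\adh{h(\mc)}=\adh{M}=\mba$, so $\bar h(\adh{\mc})\subset \mba$.

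For the second statement, apply the same argument to $f:=h_2^{-1}\circ h_1:\mc_1\to\mc_2$. This is a definable $\Cc^\infty$ diffeomorphism, and by the chain rule
$$D_xf=D_{h_1(x)}(h_2^{-1})\cdot D_xh_1,$$
whence $\sup_{x\in\mc_1}|D_xf|\le \sup|Dh_2^{-1}|\cdot \sup|Dh_1|<\infty$. As $\mc_1$ is normal, the first part applied to $f$ (component-wise, exactly as above) gives a continuous $\bar f:\adh{\mc_1}\to\R^k$; since $f(\mc_1)=\mc_2\subset\adh{\mc_2}$ and $\adh{\mc_2}$ is closed, $\bar f(\adh{\mc_1})\subset \adh{\mc_2}$. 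Symmetrically, $g:=h_1^{-1}\circ h_2$ extends to a continuous $\bar g:\adh{\mc_2}\to\adh{\mc_1}$.

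It remains to check that $\bar f$ and $\bar g$ are mutually inverse. The compositions $\bar g\circ\bar f$ and $\bar f\circ\bar g$ are continuous mappings on $\adh{\mc_1}$ and $\adh{\mc_2}$ that coincide with the identity on the dense subsets $\mc_1$ and $\mc_2$; by the Hausdorff property of Euclidean space, continuous extensions from a dense subset are unique, so $\bar g\circ \bar f=\mathrm{id}_{\adh{\mc_1}}$ and $\bar f\circ\bar g=\mathrm{id}_{\adh{\mc_2}}$. Hence $\bar f$ is a homeomorphism.

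Honestly, there is no real obstacle here: the proposition is essentially a corollary of Lemma \ref{lem_local_normal_extension} together with the chain rule. The only point to be careful about is checking that the derivative of the composition $h_2^{-1}\circ h_1$ is \emph{globally} bounded on $\mc_1$, which is precisely what the two uniform bounds in Definition \ref{dfn_normalization} are designed to guarantee.
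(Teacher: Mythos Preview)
Your proof is correct and is exactly the argument the paper has in mind: the proposition is stated without proof, preceded only by the sentence ``It also yields:'' right after Lemma~\ref{lem_local_normal_extension}, so the intended proof is precisely the coordinate-wise application of that lemma (using normality of $\mc$, $\mc_1$, $\mc_2$ and the global derivative bounds from Definition~\ref{dfn_normalization}) followed by the standard density argument to see that the two extensions are mutual inverses. One cosmetic remark: you reuse the symbols $h_1,\dots,h_n$ for the coordinate functions of $h$ while $h_1,h_2$ already denote the two normalizations in the second part---renaming the coordinates would avoid the clash.
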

%
%
%

For $x\in \delta M$, let \begin{equation*}\label{eq_c_m}
                                                                           \cfr_M(x):=\mbox{ number of connected components of $\bou(x,\ep)\cap M$, $\ep>0$ small.}
                                                                          \end{equation*} 
 If $\overline{h}$ stands for the extension of $h$ to $\adh{\mc}$, then $ \overline{h}^{-1}(x)$ has exactly $ \cfr_M(x)$ points, for each $x\in\delta M$. In particular, $\overline{h}$ must be finite-to-one. 
 Remark also that this proposition somehow establishes uniqueness of $\Cc^\infty$ normalizations, in the sense that the inner Lipschitz geometry of $\adh{\mc}$ is independent of the chosen normalization $\mc$. The next result is devoted to their existence.   

Given a submanifold $S\subset \R^n$ and $x\in \R^n$, we denote by $\pi_S(x)$  the point of $S$ at which the euclidean distance to $S$ is reached (this point is unique if $x$ is sufficiently close to $S$) and by $\rho_S(x)$ the square of this distance. These two mappings, defined on a suitable neighborhood $U_S$ of $S$,  are globally subanalytic and  $\Cc^\infty$ if $U_S$ is small enough (see for instance \cite{livre}). We then say that $(U_S,\pi_S,\rho_S)$ is a {\bf tubular neighborhood of $S$}.

 \begin{pro}\label{pro_normal_existence}
Every bounded definable manifold admits a $\Cc^\infty$ normalization.
\end{pro}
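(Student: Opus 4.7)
The plan is to normalize $M$ by an iterative ``sheet-separating'' surgery, inducting on the number of strata of $\delta M$ at which $M$ fails to be locally connected. Fix a locally bi-Lipschitz trivial stratification $\Sigma$ of $\adh M$ compatible with $\delta M$ (Theorem~\ref{pro_existence_stratifications}), refined so that $\cfr_M$ is constant equal to some $c_S\ge 1$ on each stratum $S\subset\delta M$. Let $n(\Sigma)$ count the strata $S\subset\delta M$ with $c_S>1$; if $n(\Sigma)=0$ then $M$ is already normal and $h=\mathrm{id}_M$ works, so suppose $n(\Sigma)>0$.

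Pick $S\subset\delta M$ with $c_S>1$ that is minimal for the frontier order $S'\preceq S''\Longleftrightarrow S'\subset\adh{S''}$, and fix a tubular neighborhood $(U_S,\pi_S,\rho_S)$. By local bi-Lipschitz triviality, after shrinking $U_S$ (and, if necessary, passing to a finite definable cover of $S$ to kill possible monodromy of sheets) the set $M\cap U_S$ decomposes as $C_1\sqcup\cdots\sqcup C_{c_S}$ into $c_S$ globally defined sheets. Let $\chi_i$ be the locally constant indicator of $C_i$ on $M\cap U_S$, and choose a subanalytic cutoff $\psi\in\Cc^\infty(\R^n)$ that is identically $1$ on a neighborhood of $\adh S$ with $\supp\psi\subset U_S$. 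Define
$$g:M\to\R^{n+c_S},\qquad g(x):=\bigl(x,\,\psi(x)\chi_1(x),\dots,\psi(x)\chi_{c_S}(x)\bigr),$$
where each $\psi\chi_i$ is extended by $0$ on $M\setminus U_S$. The first-coordinate projection $\R^{n+c_S}\to\R^n$ is a smooth $1$-Lipschitz left inverse to $g$, so $g$ is a definable $\Cc^\infty$ diffeomorphism onto its image $\mc'\subset\R^{n+c_S}$ with $\sup|Dg|<\infty$ and $\sup|Dg^{-1}|<\infty$. Because $\psi(p)=1$ for $p\in S$, the approach limits $\lim_{x\to p,\,x\in C_i}g(x)=(p,e_i)$ are pairwise distinct as $i$ varies, so $S$ is replaced in $\adh{\mc'}$ by $c_S$ separated normal boundary strata. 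Applying the inductive hypothesis to $\mc'$ to produce $\widetilde h:\mc\to\mc'$ and setting $h:=g^{-1}\circ\widetilde h:\mc\to M$ completes the construction.

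The principal difficulty is to show that the induced stratification $\Sigma'$ of $\adh{\mc'}$ satisfies $n(\Sigma')<n(\Sigma)$, i.e.\ that the surgery neither creates new non-normal strata nor destroys normality elsewhere. For strata $S'\subsetneq\adh S$, the frontier condition forces $\dim S'<\dim S$, and minimality of $S$ ensures $c_{S'}=1$ already; since $\psi\equiv 1$ on a neighborhood of $\adh S$, the added coordinates are locally constant there and preserve local connectivity. For strata $S'$ with $\adh{S'}\supset\adh S$, local bi-Lipschitz triviality along $S'$ implies every local sheet of $M$ at $S'$ lies inside a single $C_i$, so $g$ only subdivides and never merges sheets at $S'$. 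For strata disjoint from $\supp\psi$ nothing changes. These three cases, together with a routine refinement of $\Sigma'$, yield the required strict decrease $n(\Sigma')<n(\Sigma)$, and the induction terminates.
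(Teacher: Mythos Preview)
There are two related gaps. First, your setup is inconsistent whenever $\delta S\neq\emptyset$ (and your minimality hypothesis on $S$ only guarantees that strata in $\delta S$ are normal, not that $\delta S$ is empty). A tubular neighborhood $U_S$ of $S$ does not contain $\delta S$, so you cannot have $\psi\equiv 1$ on a neighborhood of $\adh S$ together with $\supp\psi\subset U_S$. Worse, if you force $\delta S\subset U_S$, then near a point $p\in\delta S$ with $\cfr_M(p)=1$ the set $M\cap\bou(p,\epsilon)$ is connected and hence lies in a \emph{single} $C_i$; for $q\in S$ close to $p$ the $c_S$ local sheets of $M$ at $q$ are then all in that same $C_i$, so your map $g$ does not separate them and the surgery fails to normalize $S$ near $\delta S$.

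Second, even when $\delta S=\emptyset$, the induction on the \emph{count} $n(\Sigma)$ is not well-founded. Take $S$ a point with $c_S=2$, sheets $C_1,C_2$, and a curve $S''\ni\adh S$ with $c_{S''}=4$, two local sheets in each $C_i$. After your surgery, above $S''\cap\{\psi>0\}$ you get two arcs (at heights $\psi e_1$ and $\psi e_2$), each carrying two sheets of $\mc'$, and above $S''\cap\{\psi=0\}$ one arc carrying four sheets; any stratification of $\delta\mc'$ must use at least three non-normal strata here, so $n(\Sigma')>n(\Sigma)$. The paper avoids both issues by inducting on $\dim B_M$ (where $B_M=\{\cfr_M>1\}$) and choosing $S$ of \emph{maximal} dimension. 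The separating functions $\rho_i$ are built to vanish at $\delta S$ while keeping bounded gradient (this is where \L ojasiewicz's inequality is genuinely needed), so the graph map leaves a neighborhood of $\delta S$ untouched, and one checks $B_{\mc'}\subset\overline\pi^{-1}(B_M\setminus S)$, which has strictly smaller dimension.
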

\begin{proof}
   Let $B_M$ be the set of points of $\delta M$  at which $\cfr_M(x)>1$ and set $k_M:=\dim B_M$. We are going to prove the result by induction on $k_M$ (if $k_M=-1=\dim \emptyset$, we are done). 
 
Take a stratification $\Sigma$ of $\delta M$  such that $\cfr_M$ is constant on the strata, and let $S$ be a $k_M$-dimensional stratum included in $B_M$. 
As the construction that we are going to carry
out may be realized simultaneously for every such $S$, we
will assume that $S$ is the only element of dimension $k_M$ of $\Sigma$ included in $B_M$.

Let $(U_S,\pi_S,\rho_S)$ be a tubular neighborhood of $S$ and denote by  $C_1,\dots,C_l$  the connected components of $M \cap
U_S$.  Taking $U_S$ smaller if necessary, we can assume that 
 $l=\cfr_M(x)$, for all $x\in S$.

Let $\alpha$ be a definable $\Cc^\infty$ function on $U_S$ satisfying $|\alpha(x)-d(x,\R^n \setminus U_S)|<\frac{d(x,\R^n \setminus U_S)}{2}$ (by Efroymson's approximation theorem for globally subanalytic functions \cite{ef}, such a function exists). It follows from \L ojasiewicz's inequality that there is an integer $\kappa_1 \ge 1$ such that $\alpha^{\kappa_1-1} \lesssim\frac{1}{1+|\pa \alpha|}$ on $U_S$, so that the function $\alpha^{\kappa_1}$ has bounded gradient on this set. Similarly, there is a definable $\Cc^\infty$ function $\beta$ on $\R^n\setminus\delta S$ such that $|\beta(x)-d(x,\delta S)|<\frac{d(x,\delta S)}{2}$ and an integer $\kappa_2$ such that  $\beta^{\kappa_2}$ has bounded gradient. We claim that, choosing $\kappa_2$ bigger if necessary, we can require in addition that there is a constant $A$ such that 
on the set $$Z:=\{x\in U_S: \rho_S(x) \le \alpha(x) \},$$ we have:\begin{equation}\label{eq_alpha_beta}
\beta(x)^{\kappa_2} \le A \alpha(x)^{\kappa_1}. 
\end{equation}
Indeed, thanks to \L ojasiewicz's inequality, it suffices to check that $\lim_{t \to 0 }\beta(\gamma(t))=0$ for every definable arc $\gamma:[0,1] \to Z$ such that $\lim_{t\to 0} \alpha(\gamma(t))=0$. To see this,  note that such an arc $\gamma$ must tend to a point of $\delta U_S$ (since $\alpha$ tends to zero) which is a point of $\adh{S}$ (since $\rho_S$ also tends to zero, due to the definition of $Z$), which entails that $\lim_{t\to 0} \gamma(t) \in \delta S$ (since $\delta U_S\cap \adh S\subset \delta S$), from which we can conclude that $\beta$ tends to zero, as required. 

We now are going to construct for every $i\le l$ a definable $\Cc^\infty$ function $\rho_i$ on $M$  satisfying:
\begin{enumerate}[(a)]
\item There is a neighborhood $V_S$ of $S$ such that $\rho_i>\frac{\beta^{\kappa_2}}{2}$ on $C_i\cap V_S$, and $\rho_i<\frac{\beta^{\kappa_2}}{3}$ on $C_j\cap V_S$ for every $j\ne i$.  
\item $\rho_i$ has bounded first derivative and tends to zero as we are drawing near $\delta S$. 
\end{enumerate}

Choose a  $\Cc^1$ definable function $\phi:\R \to \R$ such that $\phi(t)=0$ for $t\ge 1$, $\phi(t)=1$ for $t<\frac{1}{2}$ and let $i\le l$.  Define then a $\Cc^1$ function on $C_i$ by setting for $x\in C_i$
$$\rho_i(x):= \phi\left(\frac{\rho_S(x)^{\kappa_1}}{\alpha(x)^{\kappa_1}}\right)\cdot \beta(x)^{\kappa_2},$$
and extend this function as identically zero on  $M\setminus C_i$.  We shall establish that $\rho_i$ satisfies properties $(a)$ and $(b)$. This function $\rho_i$ is not $\Cc^\infty$ but just $\Cc^1$. However,
as properties $(a)$ and $(b)$ also hold for any sufficiently close approximation of $\rho_i$  (in the $\Cc^1$ topology, see \cite{ef}),  the existence of a $\Cc^\infty$ function satisfying these properties will then follow from Efroymson's approximation theorem for globally subanalytic functions \cite{ef}. 

Thanks to (\ref{eq_alpha_beta}),   a straightforward computation of derivative yields that $\rho_i$ has bounded first derivative (note that the support of the function $ \phi'\left(\frac{\rho_S(x)^{\kappa_1}}{\alpha(x)^{\kappa_1}}\right)$ is included in the above set $Z$). Moreover, 
as $\beta$ tends to zero as $x$ tends to $\delta S$, it is clear that so does $\rho_i$, which yields $(b)$. Fact $(a)$ also holds since $\rho_i$ vanishes on $C_j$ for $j\ne i$, and $\phi\equiv  1$ near $S$.   

Let now for $x \in M$,  $\rho(x):=(\rho_1(x),\dots,\rho_l(x))\in \R^{l},$
and let $\check{M}$ stand for the graph of $\rho$. Denote by $\pi:\check{M}\to M$ the map induced by the canonical projection onto $\R^n$ and  by $\overline{\pi}:\adh{\check{M}}\to \mba$ its extension. As $\pa \rho_i$ is bounded for all $i$, the mappings $\pi$ and $\pi^{-1}$ both have bounded first derivative.

By $(a)$, the sets $\adh{\Gamma_{\rho_{|C_j}}}\cap \overline{\pi}^{-1}(S)$, $j\le l$, are disjoint from each other. Hence, 
  as for each $j$, the function $\rho_{|C_j}$ extends continuously,    $\adh{\Gamma_{\rho}}$ must be
 connected at every point of $\overline{\pi}^{-1}(S)$.   Observe also that, due to $(b)$  and Lemma \ref{lem_local_normal_extension}, at a given point $x\in \delta M$, the function $\rho$ cannot have more than $\cfr_M(x)$ asymptotic values, which means that $\overline{\pi}$ is finite-to-one and that $\mc$ is connected at every point above $\mba\setminus B_M$.  Hence, $\{x \in \mc :\cfr_{\mc}(x)>1\} \subset \overline{\pi}^{-1}(B_M\setminus S)$ (since $\cfr_{\mc}(x)\equiv 1$  on  $ \overline{\pi}^{-1}(S)\cup  \overline{\pi}^{-1}(\mba\setminus B_M) $), which has dimension less than $k_M$ (since  $\overline{\pi}$ is finite-to-one).  
 This completes the induction step.
  \end{proof}

  \begin{rem}\label{rem_normalisations_C_i} We have used the subanalytic version of  Efroymson's approximation theorem  \cite{ef} because we wanted to construct 
 $\Cc^\infty$ normalizations. If one only wishes to construct  $\Cc^i$ normalizations $i\in \N$ (defined analogously, they seem to be satisfying for many purposes), it is easy to spare this theorem which is rather difficult to prove. 
 \end{rem}
 \end{subsection}

\begin{subsection}{The general case.} If the manifold $M$ is not assumed to be normal, it is possible to define a multivalued trace by considering a normalization, and then prove some partial  generalizations of the results of section \ref{sect_case_normal}. We first point out some facts useful for this purpose. 

Fix a normalization $h:\mc\to M$.  
 As $h$ and $h^{-1}$ have bounded derivative the mapping $h_*:\W^{1,p}(\mc)\to \W^{1,p}(M)$, $u\mapsto u\circ h^{-1}$ is a continuous isomorphism for all $p$. Theorem \ref{thm_trace} thus immediately yields that  
 for $p\in [1,\infty)$ sufficiently large, the space \begin{equation}\label{eq_c_h} \Cc^h (\mba):=h_* \Cc^\infty (\adh{\mc})=\{u\circ h^{-1} : u \in \Cc^\infty(\adh{\mc}) \} \end{equation}                                                                                                                                                                            
is  dense in $\W^{1,p}(M)$.

Although the functions of $ \Cc^h(\mba)$  may fail to be smooth on $\mba$, this ring is satisfying for many purposes. A given function $v$ of this ring  has the property that for every $\xo$ in $\mba$, the restriction of $v$ to a connected component $U$ of $\bou(\xo,\ep)\cap M$, $\ep>0$ small, extends to a function which is Lipschitz with respect to the inner metric. It actually can be required (see \cite{gvpoincare, livre}) that the gradient of this restriction  extends to a stratified $1$-form and therefore can be used to perform integrations by parts.

   By Proposition \ref{pro_normalization_uniqueness}, $h$ extends to a continuous mapping $\adh{h}:\adh{\mc} \to \adh{M}$. We also have:
\begin{pro}\label{pro_stra}
 There are stratifications $\check{\mathcal{S}}$ and $\mathcal{S}$ of $\delta \mc$ and $\delta M$ respectively such that for each $S\in \mathcal{S}$, $\adh{h}^{-1}(S)=\bigcup_{i=1}^j S_i$, where, for each $i\le j$, $S_i$ is a stratum of $\check{\mathcal{S}}$ on which $\adh{h}$ induces a diffeomorphism $h_{S_i}:S_i\to S$ satisfying $\sup_{x\in S_i} |D_x h_{S_i}|<\infty$ and $\sup_{x\in S} |D_x h_{S_i}^{-1}|<\infty$.   
\end{pro}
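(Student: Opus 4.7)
My plan is to stratify the graph of $\adh{h}$ in $\adh{\mc}\times\mba$ and project down, producing compatible stratifications of the source and target; then argue that the finite-to-oneness of $\adh{h}$, combined with generic smoothness, upgrades the restrictions to strata into diffeomorphisms. The uniform bounds on the derivatives of $h_{S_i}$ and $h_{S_i}^{-1}$ will be extracted from the hypothesis that $|Dh|$ and $|Dh^{-1}|$ are bounded, via a Whitney-(a) limit argument.

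First I would take stratifications $\check{\Sigma}_0$ of $\adh{\mc}$ and $\Sigma_0$ of $\mba$ compatible with $\mc,\delta\mc$ and with $M,\delta M$ respectively. The graph $\Gamma\subset\adh{\mc}\times\mba$ of $\adh{h}$ is subanalytic, and by the standard $\Cc^\infty$ cell decomposition / generic smoothness results in the o-minimal category (see \cite{vdd,costeomin,livre}) I can refine $\check{\Sigma}_0$ and $\Sigma_0$ into $\Cc^\infty$ stratifications such that $\Gamma$ is compatible with their product, so that for every stratum $\tilde{T}\in\check{\Sigma}_0$ the image $\adh{h}(\tilde{T})$ lies in a single stratum $T$ of $\Sigma_0$ and the restriction $\adh{h}|_{\tilde{T}}\colon\tilde{T}\to T$ is $\Cc^\infty$ and definable. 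I would then use that $\adh{h}$ is finite-to-one (the paragraph following Proposition \ref{pro_normalization_uniqueness}) together with generic smoothness once more: a definable $\Cc^\infty$ finite-to-one map is an immersion at generic points, and so after a further refinement $\adh{h}|_{\tilde{T}}$ becomes an immersion everywhere; since it is finite-to-one, the image $T$ has the same dimension as $\tilde{T}$ and $\adh{h}|_{\tilde{T}}$ is a local diffeomorphism; a last subdivision of $\tilde{T}$ into finitely many definable pieces makes this restriction globally injective, hence a $\Cc^\infty$ diffeomorphism onto $T$. Restricting to $\delta\mc$ and $\delta M$ yields the desired $\check{\mathcal S}$ and $\mathcal S$ together with the combinatorial description $\adh{h}^{-1}(S)=\bigcup_i S_i$.

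It only remains to secure the uniform bounds on $|Dh_{S_i}|$ and $|Dh_{S_i}^{-1}|$. For this I would refine $\check{\mathcal S}$ and $\mathcal S$ further to be locally bi-Lipschitz trivial in the sense of Theorem \ref{pro_existence_stratifications}, which in particular makes them Whitney (a) regular. Fix $x\in S_i$ and a sequence $y_n\in\mc$ with $y_n\to x$. By Whitney (a) regularity, along a subsequence $T_{y_n}\mc$ converges to a subspace $\tau\subset\R^k$ with $T_xS_i\subset\tau$. Setting $C:=\sup_{y\in\mc}|D_yh|<\infty$ and extending each $D_{y_n}h$ by zero on the orthogonal complement of $T_{y_n}\mc$ in $\R^k$, we get a bounded sequence of linear maps $\R^k\to\R^n$, which converges along a further subsequence to a linear map $L\colon\R^k\to\R^n$ with $|L|\le C$. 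The smoothness of $\adh{h}|_{S_i}$ identifies $L|_{T_xS_i}$ with $D_xh_{S_i}$, so $|D_xh_{S_i}|\le C$ uniformly in $x\in S_i$; the same argument applied to $h^{-1}$ gives the bound on $|Dh_{S_i}^{-1}|$.

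The main obstacle is the last identification $L|_{T_xS_i}=D_xh_{S_i}$: we need the tangential derivative of the extension $\adh{h}$ along the stratum $S_i$ to agree with the subsequential limit of the derivatives of $h$ at approaching points of $\mc$. This is exactly the content of Whitney (a) regularity together with the $\Cc^\infty$ smoothness of $\adh{h}|_{S_i}$; it is for this reason that I would insist on the locally bi-Lipschitz trivial refinement in the final step.
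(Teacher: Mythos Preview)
Your overall strategy---stratify the graph of $\adh{h}$ and project---matches the paper's, though the paper executes it more economically: it simply replaces $\mc$ by the graph $\{(h(x),x)\}$, so that $h$ becomes the restriction of a linear projection; a single cell decomposition of this new $\adh{\mc}$ then automatically projects to a cell decomposition downstairs, and the finite-to-one property makes each cell map onto a cell. The derivative bounds are not argued directly but obtained by quoting \cite[Lemma~2.8]{gvpoincare} (equivalently \cite[Proposition~2.7.13]{livre}).

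Your derivative-bound argument, however, has a genuine gap. Whitney~(a) regularity of $\check{\mathcal S}$ only tells you that $T_{y_n}\mc\to\tau\supset T_xS_i$; it says nothing about the map $h$, so there is no reason why the subsequential limit $L$ of the (zero-extended) $D_{y_n}h$ should satisfy $L|_{T_xS_i}=D_xh_{S_i}$. Smoothness of $\adh{h}|_{S_i}$ alone does not help either: it gives no comparison between derivatives computed \emph{inside} $S_i$ and limits of derivatives computed \emph{inside} $\mc$. The fix is to impose Whitney~(a) on the stratification of the \emph{graph} $\Gamma$ that you already introduced, rather than on $\check{\mathcal S}$. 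Then $T_{(x,\adh{h}(x))}\Gamma_{S_i}=\{(w,D_xh_{S_i}(w)):w\in T_xS_i\}$ lies in the limit of the tangent spaces $T_{(y_n,h(y_n))}\Gamma_0=\{(v,D_{y_n}h(v)):v\in T_{y_n}\mc\}$; hence for each $w\in T_xS_i$ there are $v_n\to w$ with $D_{y_n}h(v_n)\to D_xh_{S_i}(w)$, and $|D_xh_{S_i}(w)|\le C|w|$ follows immediately (and symmetrically for $h_{S_i}^{-1}$). With this correction your argument goes through; note also that after subdividing $\tilde T$ to force injectivity you must correspondingly refine $\mathcal S$ so that the images remain full strata.
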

\begin{proof}Possibly replacing $\mc$ with the set $\{(y,x):y=h(x)\}$, we can assume that $h$ is given by a canonical projection. We then take a cell decomposition $\D$ compatible with $\adh{\mc}$ and $\mc$, which immediately gives rise to a cell decomposition $\E$ of $\R^n$ (see  \cite{livre} Remark $1.2.2$).  As $h$ is finite-to-one,  it maps the cells of $\St$ onto the cells of $\E$. These two cell decompositions induce stratifications of $\adh{\mc}$ and $\adh{M}$ respectively. That we can refine these stratifications to stratifications such that $h$ and $h^{-1}$ have bounded first derivative on the strata now follows from \cite{gvpoincare} Lemma $2.8$ (see also \cite{livre} Proposition $2.7.13$).
\end{proof}
\subsection*{Definition of the trace.} We first define $\tra_S$  when $S$ is a stratum of $\mathcal{S}$, where $\mathcal{S}$ is provided by Proposition \ref{pro_stra} above.  Let $l:=\sup_{x\in M} \cfr_M(x)$, fix $S\in \mathcal{S}$, and remark that $\cfr_M$ is constant on $S$. We thus can
  define $\tra_S : \W^{1,p}(M) \to L^p (S)^l$ by setting for $v\in \W^{1,p}(M)$  (and $p\in [1,\infty)$ large): $$\tra_S v := \left( (\tra_{S_1} v\circ h)\circ h_{S_1}^{-1}, \dots , (\tra_{S_j} v\circ h)\circ h_{S_j}^{-1}, 0,\dots, 0\right).$$  Here, we add $(l-j)$ times the zero function because  it will be convenient that the trace has the same number of components for all $S \in \mathcal{S}$. This mapping of course depends on the way the elements $S_1,\dots, S_j$ are enumerated (see nevertheless on this issue Proposition \ref{pro_trace} below). Notice that since $h_{S_i}:S_i\to S$ and its inverse have bounded derivative, Theorem \ref{thm_trace} ensures that $\tra_S$ is a bounded  operator.
  

 Take now  a subanalytic subset $A$  of $\delta M$ and assume that $\mathcal{S}$ is compatible with $A$.   In this situation, we  can define $\tra_A v$ as the function induced by the mappings $\tra_S v$, $S\in \mathcal{S}$, $S\subset A$, $\dim S=\dim A$. Since the strata of dimension less than $k:=\dim A$ are $\hn^k$-negligible this definition is clearly independent of the chosen stratification. 
 
%
Theorem \ref{thm_trace} now has the following consequence:
\begin{cor}\label{cor_trace_bounded}
Let $A\subset \delta M$ be a subanalytic set of dimension $k$.  For $p\in [1,\infty)$ sufficiently large, the linear operator $$\tra_A :\W^{1,p}(M) \to L^p(A,\hn^k)^l,$$ is bounded. 
\end{cor}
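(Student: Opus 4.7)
The plan is to reduce the statement to the normal case already treated in Theorem \ref{thm_trace}$(ii)$ by transporting everything through the normalization $h:\mc\to M$. Fix a stratification $\mathcal{S}$ of $\delta M$ compatible with $A$ and a stratification $\check{\mathcal{S}}$ of $\delta\mc$ as provided by Proposition \ref{pro_stra}. Since $\hn^k$-a.e. point of $A$ lies in some stratum of $\mathcal{S}$ of top dimension $k=\dim A$, and since the sum of finitely many bounded operators into $L^p$ of disjoint measurable pieces is bounded, it suffices to prove that for every such stratum $S\subset A$ with $\dim S=k$, the operator $\tra_S:\W^{1,p}(M)\to L^p(S,\hn^k)^l$ is bounded.

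By definition, $\tra_S v$ has components of the form $(\tra_{S_i}(v\circ h))\circ h_{S_i}^{-1}$, where $S_i\subset \adh{h}^{-1}(S)$ is a stratum of $\check{\mathcal{S}}$ and $h_{S_i}:S_i\to S$ is a diffeomorphism with $\sup|D h_{S_i}|<\infty$ and $\sup|D h_{S_i}^{-1}|<\infty$. The key step is to control each such component separately. First, since $h_{S_i}^{-1}:S\to S_i$ has bounded derivative, its Jacobian with respect to the Hausdorff measures $\hn^k$ on $S$ and $S_i$ is bounded, so the change of variables formula gives
$$\|(\tra_{S_i}(v\circ h))\circ h_{S_i}^{-1}\|_{L^p(S,\hn^k)} \lesssim \|\tra_{S_i}(v\circ h)\|_{L^p(S_i,\hn^k)}.$$
Next, since $\mc$ is normal and $S_i\subset \delta \mc$ is subanalytic of dimension $k$, for all sufficiently large $p$ Theorem \ref{thm_trace}$(ii)$ applied to the manifold $\mc$ yields
$$\|\tra_{S_i}(v\circ h)\|_{L^p(S_i,\hn^k)} \lesssim \|v\circ h\|_{\W^{1,p}(\mc)}.$$
Finally, since $h$ and $h^{-1}$ have bounded derivative, the pull-back $v\mapsto v\circ h$ is a continuous isomorphism between $\W^{1,p}(M)$ and $\W^{1,p}(\mc)$, so $\|v\circ h\|_{\W^{1,p}(\mc)}\lesssim \|v\|_{\W^{1,p}(M)}$.

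Chaining the three estimates bounds each component of $\tra_S v$ by a constant times $\|v\|_{\W^{1,p}(M)}$. Since there are only finitely many strata $S_i$ above $S$ (by finiteness of the stratifications) and only finitely many strata $S$ of dimension $k$ in $A$, summing the resulting finite number of estimates gives $\|\tra_A v\|_{L^p(A,\hn^k)^l}\lesssim \|v\|_{\W^{1,p}(M)}$ for all $p$ large enough. There is no real obstacle here: the only thing to check with care is that the threshold on $p$ can be taken uniform across all the finitely many strata $S_i$ that appear, but this is immediate since finitely many thresholds from Theorem \ref{thm_trace}$(ii)$ admit a common upper bound.
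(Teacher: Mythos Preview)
Your proof is correct and follows exactly the approach of the paper, which treats the corollary as an immediate consequence of Theorem~\ref{thm_trace}$(ii)$ applied to the normal manifold $\mc$, transported back to $M$ via the bounded-derivative diffeomorphisms $h$ and $h_{S_i}$ furnished by Proposition~\ref{pro_stra}. The paper does not even write out a proof, merely noting just before the statement that boundedness of $\tra_S$ follows from Theorem~\ref{thm_trace} since $h_{S_i}$ and its inverse have bounded derivative; you have simply spelled out these details.
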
 
%

 We then denote by $\tra_{A,1},\dots \tra_{A,l}$ the components of $\tra_A$.  As pointed out before Corollary \ref{cor_trace_bounded}, the trace depends on the way the strata $S_1,\dots, S_j$ above are enumerated. This choice being made, we however have:
\begin{pro}\label{pro_trace}Let $A\subset \delta M$ be subanalytic. If $p$ is sufficiently large then, for every  $v\in \W^{1,p}(M)$,  
the set of functions $\{ \tra_{A,1}\, v , \dots , \tra_{A,l} \,v\}$ does not depend on the chosen $\Cc^\infty$ normalization. 
\end{pro}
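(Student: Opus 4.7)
The plan is to compare the traces defined by two arbitrary $\Cc^\infty$ normalizations $h_1:\mc_1\to M$ and $h_2:\mc_2\to M$. Set $\Phi:=h_2^{-1}\circ h_1:\mc_1\to\mc_2$. Since $h_1,h_2$ and their inverses all have bounded derivative, so do $\Phi$ and $\Phi^{-1}$, making $\Phi$ a definable bi-Lipschitz smooth diffeomorphism. By Proposition \ref{pro_normalization_uniqueness}, it extends to a homeomorphism $\overline{\Phi}:\adh{\mc_1}\to\adh{\mc_2}$, and by construction $\adh{h_1}=\adh{h_2}\circ\overline{\Phi}$ on $\adh{\mc_1}$.

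Next, I would apply Proposition \ref{pro_stra} to both $h_1$ and $h_2$ and then refine further so as to obtain a single stratification $\mathcal{S}$ of $\adh{M}$ compatible with $A$, together with stratifications $\check{\mathcal{S}}_1$ of $\adh{\mc_1}$ and $\check{\mathcal{S}}_2$ of $\adh{\mc_2}$, such that $\adh{h_1},\adh{h_2}$ and $\overline{\Phi}$ all map strata to strata and induce on each of them diffeomorphisms with bounded derivatives. This further refinement is obtained from the same subanalytic cell-decomposition and Lipschitz-stratification tools used in the proof of Proposition \ref{pro_stra}, applied this time to $\overline{\Phi}$. For any stratum $S\subset A$ of $\mathcal{S}$ of dimension $k=\dim A$, Proposition \ref{pro_normalization_uniqueness} ensures that $\adh{h_1}^{-1}(S)$ and $\adh{h_2}^{-1}(S)$ have the same number $j=\cfr_M(x)$ of connected components (for any $x\in S$), say $S^1_1,\dots,S^1_j$ and $S^2_1,\dots,S^2_j$, and $\overline{\Phi}$ induces, after a suitable relabelling, a diffeomorphism $S^1_i\to S^2_{\sigma(i)}$ for some permutation $\sigma$, satisfying $h_{S^1_i}=h_{S^2_{\sigma(i)}}\circ\overline{\Phi}|_{S^1_i}$.

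It then suffices to prove that for every $v\in\W^{1,p}(M)$ and every $i$,
$$\tra_{S^1_i}(v\circ h_1)\circ h_{S^1_i}^{-1}=\tra_{S^2_{\sigma(i)}}(v\circ h_2)\circ h_{S^2_{\sigma(i)}}^{-1}\quad\text{in } L^p(S,\hn^k).$$
Both sides depend continuously on $v$ by Corollary \ref{cor_trace_bounded}, so by density of $\Cc^{h_1}(\mba)$ in $\W^{1,p}(M)$ (see (\ref{eq_c_h})) it is enough to verify this for $v=u\circ h_1^{-1}$ with $u\in\Cc^\infty(\adh{\mc_1})$. For such $v$, the function $v\circ h_1=u$ is continuous on $\adh{\mc_1}$, and $v\circ h_2=u\circ\overline{\Phi}^{-1}$ is continuous on $\adh{\mc_2}$. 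Applying Remark \ref{rem_addendum}(\ref{item_cont}) on the normal manifolds $\mc_1$ and $\mc_2$, the two traces reduce to the corresponding restrictions, and the relation $h_{S^1_i}=h_{S^2_{\sigma(i)}}\circ\overline{\Phi}|_{S^1_i}$ shows that both sides, evaluated at $x\in S$, equal $u(h_{S^1_i}^{-1}(x))$. Consequently the family $\{\tra_{A,1}v,\dots,\tra_{A,l}v\}$ obtained from $h_2$ is just a relabelling by $\sigma$ of the one obtained from $h_1$, which proves the proposition. The main obstacle is the joint refinement in step two: obtaining a simultaneous stratification on which $\overline{\Phi}$ as well as $\adh{h_1}$ and $\adh{h_2}$ are strata-preserving bounded-derivative diffeomorphisms; once this is in place, the density/continuity argument handles everything automatically.
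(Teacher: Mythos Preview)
Your argument is correct and follows essentially the same approach as the paper: both reduce to the dense subspace $\Cc^{h_1}(\mba)$ via continuity of the trace, and for such functions use Remark \ref{rem_addendum}(\ref{item_cont}) on the normal manifolds $\mc_1,\mc_2$ to identify traces with restrictions, the agreement then following from $\adh{h_1}=\adh{h_2}\circ\overline{\Phi}$. The paper compresses your permutation and stratification bookkeeping into the phrase ``the result is therefore clear if $v\circ h_1^{-1}$ extends continuously,'' whereas you spell out the joint refinement of stratifications and the permutation $\sigma$ explicitly; this extra care is legitimate but not strictly necessary, since the definition of $\tra_A$ is already declared independent of the chosen stratification (lower-dimensional strata being $\hn^k$-negligible), so working with a common refinement changes neither side.
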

\begin{proof}
 Let $h_1:\mc_1 \to M$ and  $h_2:\mc_2\to M$ be two $\Cc^\infty$ normalizations of $M$. By Proposition \ref{pro_normalization_uniqueness},  the mapping $H:=h_2^{-1}h_1:\mc_1 \to \mc_2$ is a diffeomorphism which extends continuously on $\delta \mc_1$, giving rise  to a homeomorphism  $\adh{H} :\adh{\mc_1}\to \adh{\mc_2}$. The result is therefore clear if $v\circ h^{-1}_1$ extends continuously to $\adh{\mc_1}$ (see Remark \ref{rem_addendum} (\ref{item_cont})),  which means that it holds for every  $v\in \Cc^{h_1}(M) $ (see (\ref{eq_c_h})).  But, as this set is dense and $\tra_A$ is bounded, it means that the result must hold for each $v\in \W^{1,p}(M)$.
\end{proof}

It is worthy of notice that Theorem \ref{thm_trace} also yields that if $\Sigma$ is any stratification of $A \subset \delta M$, then (for $p$ large) $h_* \Cc^\infty _{h^{-1}(\overline{M} \setminus \overline{A})}(\overline{\mc})$ is dense in $
\underset{S\in \Sigma}{\bigcap}\ker \tra_S $, where $h$ is any $\Cc^\infty$ normalization of $M$.  In particular, in the case where $A$ is dense in $\delta M$, we get:
\begin{cor}\label{cor_densite_nonnormal} If $\Sigma$ is a stratification of a dense subset of $\delta M$ then
 $ \Cc^\infty _0(M)$ is dense in $
\underset{S\in \Sigma}{\bigcap}\ker \tra_S $ for all $p\in [1,\infty)$ sufficiently large. 
\end{cor}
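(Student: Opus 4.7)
The plan is to reduce the statement directly to the density claim highlighted in the paragraph preceding the corollary. Set $A := \bigcup_{S\in \Sigma} S$. Since $\Sigma$ is a finite collection of definable strata, $A$ is a subanalytic subset of $\delta M$, and by hypothesis $A$ is dense in $\delta M$, so $\adh{A}\supseteq \delta M$. This gives the key set-theoretic inclusion
\[
\adh{M}\setminus \adh{A}\;\subseteq\; \adh{M}\setminus \delta M \;=\; M.
\]

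Fix a $\Cc^\infty$ normalization $h:\mc\to M$ (which exists by Proposition \ref{pro_normal_existence}). The remark just above the corollary tells us that, for $p$ large enough,
\[
h_*\,\Cc^\infty_{h^{-1}(\adh{M}\setminus \adh{A})}(\adh{\mc})
\]
is dense in $\bigcap_{S\in\Sigma}\ker \tra_S$. It therefore suffices to show that every element of this space is actually an element of $\Cc^\infty_0(M)$, after which the corollary will follow immediately.

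To see this, pick $v\in \Cc^\infty(\adh{\mc})$ with $K:=\supp_{\adh{\mc}} v$ a compact subset of $h^{-1}(\adh{M}\setminus \adh{A})$. By the inclusion displayed above, $h^{-1}(\adh{M}\setminus \adh{A}) \subseteq h^{-1}(M)=\mc$, so $K$ is already a compact subset of $\mc$. Since $h:\mc\to M$ is a $\Cc^\infty$ diffeomorphism, $u:=v\circ h^{-1}$ is smooth on $M$ and its support equals $h(K)$, which is a compact subset of $M$. Hence $u\in \Cc^\infty_0(M)$, finishing the argument. There is no real obstacle beyond checking that compact support in $\adh{\mc}$ contained in the open set $\mc$ is the same as compact support in $\mc$, which is immediate once density of $A$ is used to pull the support away from the frontier.
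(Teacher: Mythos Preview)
Your proof is correct and follows exactly the route the paper intends: the corollary is presented there as the special case $A$ dense in $\delta M$ of the density statement $h_*\,\Cc^\infty_{h^{-1}(\overline{M}\setminus\overline{A})}(\overline{\mc})\subset \bigcap_{S\in\Sigma}\ker\tra_S$ is dense, together with the observation that $\overline{M}\setminus\overline{A}\subset M$ forces the approximating functions to be compactly supported in $M$. Your write-up simply makes explicit the set-theoretic verification that the paper leaves to the reader; note only that $h^{-1}$ here is tacitly the continuous extension $\overline{h}^{-1}$ (Proposition~\ref{pro_normalization_uniqueness}), so that $\overline{h}^{-1}(M)=\mc$ is the relevant identity.
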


We also can get a converse of $(i)$ of Theorem \ref{thm_trace}:

\begin{cor}\label{cor_normal}
$\Cc^\infty(\adh{M})$ is dense in $\W^{1,p}(M)$ for arbitrarily large values of $p$ if and only if $M$ is normal.
\end{cor}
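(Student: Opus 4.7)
The forward implication is immediate: if $M$ is normal then by Theorem \ref{thm_trace}$(i)$ the space $\Cc^\infty(\adh{M})$ is dense in $\W^{1,p}(M)$ for all sufficiently large $p$, hence for arbitrarily large $p$. For the converse I prove the contrapositive: assume $M$ is not normal, so that there exists $x_0\in\delta M$ with $\cfr_M(x_0)\ge 2$, and exhibit for every sufficiently large $p$ a function in $\W^{1,p}(M)$ that cannot be approximated by functions of $\Cc^\infty(\adh{M})$. The idea is to use the multivalued trace: any $u_n\in\Cc^\infty(\adh{M})$ is continuous on $\adh M$, so all components of $\tra_S u_n$ coincide, whereas a function built from a normalization can be made to have distinct components at $x_0$.

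Concretely, fix a $\Cc^\infty$ normalization $h:\mc\to M$ given by Proposition \ref{pro_normal_existence}, and stratifications $\check{\mathcal{S}}$, $\mathcal{S}$ as in Proposition \ref{pro_stra}. Let $S\in\mathcal{S}$ be the stratum containing $x_0$, and let $S_1,\dots,S_j$ be the strata of $\check{\mathcal{S}}$ mapped diffeomorphically onto $S$ by $\adh h$; since $\cfr_M$ is constant on $S$ we have $j=\cfr_M(x_0)\ge 2$. Denote $\xi_i:=h_{S_i}^{-1}(x_0)\in S_i$. Because the $\xi_i$ are distinct isolated points of $\adh h^{-1}(x_0)$, I choose a smooth bump function $\chi$ on $\R^k\supset\mc$ supported in a small ball around $\xi_1$ that avoids $\xi_2$, and set $v:=\chi_{|\adh{\mc}}\in\Cc^\infty(\adh{\mc})$, with $v\equiv 1$ near $\xi_1$ and $v\equiv 0$ near $\xi_2$. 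Put $u:=v\circ h^{-1}$; since $h_*:\W^{1,p}(\mc)\to\W^{1,p}(M)$ is a continuous isomorphism, $u\in\W^{1,p}(M)$ for all $p$.

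Now suppose, for some $p$ large enough that Corollary \ref{cor_trace_bounded} applies, there is a sequence $(u_n)\subset\Cc^\infty(\adh{M})$ converging to $u$ in $\W^{1,p}(M)$. Since $u_n$ is continuous on $\adh{M}$, Remark \ref{rem_addendum}(\ref{item_cont}) gives $\tra_{S_i}(u_n\circ h)=(u_n\circ h)_{|S_i}$, and using $h\circ h_{S_i}^{-1}=\mathrm{id}_S$ one finds
\[
\bigl(\tra_{S_i}(u_n\circ h)\bigr)\circ h_{S_i}^{-1}=u_{n|S} \qquad (i=1,\dots,j),
\]
so $\tra_{S,1} u_n=\tra_{S,2} u_n$ for every $n$. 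The same computation applied to $u=v\circ h^{-1}$ (with $v$ continuous on $\adh{\mc}$) yields $\tra_{S,1} u\equiv 1$ and $\tra_{S,2} u\equiv 0$ on a neighborhood of $x_0$ in $S$, so $\|\tra_{S,1} u-\tra_{S,2} u\|_{L^p(S)}>0$. Boundedness of $\tra_S$ forces $\tra_{S,1}u_n-\tra_{S,2}u_n\to\tra_{S,1}u-\tra_{S,2}u$ in $L^p(S)$, contradicting that the left-hand side is identically zero. Hence density fails for all large $p$, completing the contrapositive.

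The only subtle point is justifying that for $u_n\in\Cc^\infty(\adh{M})$ the multivalued trace collapses to a single value on $S$; once that is in hand via Remark \ref{rem_addendum}(\ref{item_cont}), the rest follows from continuity of $\tra_S$ and the elementary construction of the bump $v$ on $\adh{\mc}$.
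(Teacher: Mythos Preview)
Your argument is correct and follows essentially the same strategy as the paper: construct a function whose multivalued trace has distinct components at $x_0$, observe that any $u_n\in\Cc^\infty(\adh{M})$ has all trace components equal (since $u_n\circ\adh{h}$ is continuous on $\adh{\mc}$ and Remark~\ref{rem_addendum}(\ref{item_cont}) applies on the normal manifold $\mc$), and contradict boundedness of the trace. The only cosmetic differences are that the paper builds the test function directly on $M$ (as the indicator of one connected component of $M\cap\bou(x_0,\ep)$ times a cutoff) and evaluates the trace at the single point $\{x_0\}$, whereas you route the construction through the normalization and evaluate on the whole stratum $S$; the two constructions are equivalent, and either target for the trace works.
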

\begin{proof}The ``if'' part is established by Theorem \ref{thm_trace}. To prove the ``only if'' part, 
assume that $M$ fails to be normal and take a point $x_0\in\adh{M}$ at which  $M$ is not connected. For $\ep>0$ small enough, let $U_1,\dots, U_l$, $l>1$, be the connected components of $M\cap\bou(x_0,\ep)$. Let then $u:\bigcup_{j=1}^l U_j\to\R$ be the function identically equal to 1 on $U_1$ and $0$ on the other $U_j$'s. Multiplying this function by a $\Cc^\infty$ function $v$ which is $1$ on some neighborhood of $x_0$ in $\R^n$ and $0$ outside $\bou(x_0,\ep)$, we get a function $\tilde{v}$ which extends continuously to $M$. Clearly, this extension, still denoted $\tilde{v}$, belongs to $ \W^{1,p}(M)$.

Suppose now that there exists a sequence $(v_i)_{i \in \N} $ in $\Cc^\infty(\adh{M})$ tending to $\tilde{v}$ in $ \W^{1,p}(M)$. We have $\tra_{\{x_0\}}v_{i}=(v_{i}(x_0),\dots,v_i (\xo),0,\dots, 0)$, where $v_i(x_0)$ occurs $l$ times, while $\tra_{\{x_0\}}\tilde{v}= (1,0,\dots,0)$. Since, due to Corollary \ref{cor_trace_bounded}, $\tra_{\{x_0\}}v_{i}$ should tend to $\tra_{\{x_0\}}\tilde{v}$, this contradicts $l>1$.
\end{proof}

\begin{rem} Of course, if the manifold $M$ is unbounded, the trace is well-defined and $L^p_{loc}$ on the boundary.   Moreover, since we can use cutoff functions, the density results remain true in this case.
 \end{rem}
 \end{subsection}
 \end{section}

\end{document}